\newcommand{\R}{\mathbb{R}}
\newcommand{\Z}{\mathbb{Z}}
\newcommand{\cD}{\mathcal{D}}
\newcommand{\cF}{\mathcal{F}}
\newcommand{\cS}{\mathcal{S}}
\newcommand{\cU}{\mathcal{U}}
\newcommand{\hX}{\hat{X}}
\newcommand{\htau}{\hat{\tau}}
\newcommand{\tX}{\tilde{X}}
\newcommand{\tY}{\tilde{Y}}
\newcommand{\tpi}{\tilde{\pi}}
\newcommand{\barf}{\bar{f}}
\newcommand{\E}{\operatorname{E}}
\newcommand{\pr}{\operatorname{P}}
\newcommand{\ext}{\operatorname{ext}}
\newcommand{\st}{{\rm st}}
\newcommand{\eqst}{=_{\st}}
\newcommand{\lest}{\le_{\st}}
\newtheorem{lemma}{Lemma}
\newtheorem{proposition}{Proposition}
\newtheorem{theorem}{Theorem}
\newcommand{\mailto}[1]{\href{mailto:#1}{\nolinkurl{#1}}}
\newcommand{\reff}[1]{(\ref{#1})}
\numberwithin{equation}{section} \theoremstyle{plain}
\begin{document}

\title{Stability of parallel queueing systems \\ with coupled service rates\footnote{
This is a revised version (proof of Proposition~6 corrected) of the article originally published
in Discrete Event Dyn Syst (2008) 18:447--472,
\href{http://dx.doi.org/10.1007/s10626-007-0021-4}{doi:10.1007/s10626-007-0021-4}.}}

\author{
 Sem Borst\thanks{
 Eindhoven University of Technology,
 PO Box 513, 5600 MB Eindhoven, The Netherlands.
 URL: \url{http://ect.bell-labs.com/who/sem/}.
 Email: \protect\mailto{sem@win.tue.nl}.
 }
 \and
 Matthieu Jonckheere\thanks{
 Eindhoven University of Technology,
 PO Box 513, 5600 MB Eindhoven, The Netherlands.
 URL: \url{http://homepages.cwi.nl/\~jonckhee/}.
 Email: \protect\mailto{m.t.s.jonckheere@tue.nl}.
 }
 \and
 Lasse Leskel\"a\thanks{
 Aalto University,
 PO Box 11100, 00076 Aalto, Finland.
 URL: \url{http://www.iki.fi/lsl/}.
 Email: \protect\mailto{lasse.leskela@iki.fi}.
 }
}
\date{}

\maketitle

\begin{abstract}
This paper considers a parallel system of queues fed by independent arrival streams, where the
service rate of each queue depends on the number of customers in all of the queues. Necessary and
sufficient conditions for the stability of the system are derived, based on stochastic
monotonicity and marginal drift properties of multiclass birth and death processes. These
conditions yield a sharp characterization of stability for systems, where the service rate of each
queue is decreasing in the number of customers in other queues, and has uniform limits as the
queue lengths tend to infinity. The results are illustrated with applications where the stability
region may be nonconvex.
\end{abstract}

\noindent {\bf Keywords:} stability, positive recurrence, multiclass birth and death process,
Foster--Lyapunov drift criterion, stochastic comparison, coupled processors

\vspace{1ex}

\noindent {\bf AMS 2000 Subject Classification:} 60K25, 90B22, 68M20, 60K20

\vspace{1ex}

\section{Introduction}
\label{sec:intro}

We consider a parallel system of queues fed by independent arrival streams, where the service rate
of each queue depends on the number of customers in all of the queues. This type of model is
natural for manufacturing systems where a server is capable to process other queues when its own
buffer is empty, or for cellular radio networks, where the available transmission rate for
customers in a particular cell is decreasing in the number of customers in the neighboring
cells~\cite{bonald2004a}. Another important category of applications consists of processor sharing
models, where several customer classes simultaneously use one or more servers, whose rate
allocations and total processing rates may depend on the number of customers in each of the
classes~\cite{bonald2006}. For example, in wireless data networks employing channel-aware
scheduling, the total service rate available to all customers can be increasing in the total
number of customers, due to multiuser diversity~\cite{liu2003}.

Stability is arguably the most fundamental property of a queueing system, and provides a crude yet
useful first-order benchmark of the system performance. A general framework for analyzing
stochastic stability consists of Foster--Lyapunov criteria, which are based on finding a suitable
test function having a positive or negative mean drift in almost all states of the state
space~\cite{fayolle1995,meyn1993}. In the context of multiclass queueing systems with coupled
servers, these techniques have been successfully applied to systems with utility-based service
allocations~\cite{veciana2001}. Fluid limit analysis is another powerful method for finding
necessary and sufficient stability conditions for a wide class of multiclass queueing networks
with work-conserving service disciplines~\cite{dai1995,meyn1995}.

The stability analysis of multiclass queueing systems with general state-dependent service rates
is difficult, because there is no systematic way of finding test functions satisfying the
Foster--Lyapunov criteria, and the fluid limit techniques are often restricted to systems of
work-conserving servers with fixed total rate. An alternative means for deriving stability
conditions is to study whether the system of interest is stochastically comparable to a simpler
system that is easier to analyze. This approach was first used in the multiclass queueing context
by Rao and Ephremides~\cite{rao1988} and Szpankowski~\cite{szpankowski1988}, and later refined by
Szpankowski~\cite{szpankowski1994}, to characterize the stability of buffered random access
systems.

In this paper we provide an extension of the above ideas (tentatively discussed
in~\cite{jonckheere2006}), by deriving marginal drift criteria for multiclass birth and death
processes that allow us to analyze the stability of a broad class of parallel queueing systems.
Moreover, we present conditions for partial stability, where only some of the queues are stable,
and give a sharp stability characterization for systems, where the service rate of each queue is
decreasing in the number of customers in other queues, and has uniform limits as the queue lengths
tend to infinity. For systems of at most three queues, where the service rates only depend on
whether the queues are empty or not, our results yield as special cases stability
characterizations that have earlier been found using transform
methods~\cite{cohen1983,fayolle1979} and the ergodic theory of deflected random
walks~\cite{fayolle1995}.

The paper is organized as follows. Section~\ref{sec:model} describes the model details and
discusses a notion of stability that is convenient for the subsequent analysis.
Section~\ref{sec:mbd} presents a key coupling result and marginal drift criteria for the stability
of multiclass birth and death processes, while the main results regarding the stability of
queueing systems are given in Section~\ref{sec:queues}, in decreasing level of generality.
Section~\ref{sec:applications} illustrates the results with two applications, and
Section~\ref{sec:conclusion} concludes the paper.

\section{Model description}
\label{sec:model}

\subsection{Parallel queueing system with coupled service rates}
\label{sec:notation}

We consider a parallel system of $N$ queues, where each queue $i$ is fed by an independent Poisson
arrival process of rate $\lambda_i$, and served at rate $\phi_i(X_1,\dots,X_N)$ that depends on
the number of customers $X_j$ in each of the queues $j=1,\dots,N$. We assume that all customers
require independent exponentially distributed amounts of service with unit mean, and that the
system has unlimited buffer space to accommodate customers. The scheduling at each queue can be
first-come first-served, processor sharing, random order of service, or any discipline that does
not depend on the service requirements.

Under these assumptions we can model $X=(X_1,\dots,X_N)$ as a continuous-time Markov process on
$\Z_+^N$, with transitions $x \mapsto x+e_i$ occurring at rate~$\lambda_i$ and transitions
$x\mapsto x-e_i \ge 0$ at rate $\phi_i(x)$, where $e_i$ denotes the $i$-th unit vector in
$\Z_+^N$. We assume that the allocation function $\phi=(\phi_1,\dots,\phi_N)$ is bounded, which
guarantees that the process~$X$ is nonexplosive. Hence we may assume that $X$ and all other
stochastic processes treated in the sequel have paths in the space $D = D(\R_+,\Z_+^N)$ of
right-continuous functions from $\R_+$ to $\Z_+^N$ with finite left limits. Recall that a
stochastic process with paths in~$D$ can be viewed as a random element on the measurable space
$(D, \cD)$, where $\cD$ denotes the Borel $\sigma$-algebra generated by the standard Skorohod
topology~\cite{kallenberg2002}.

Observe that this model also covers scenarios where the service requirements of customers at
queue~$i$ are exponentially distributed with parameter $\mu_i \neq 1$, via replacing $\phi_i(x)$
by $\mu_i\phi_i(x)$.

\subsection{Stability notions}
\label{sec:stability}

A stochastic process~$X$ taking values in a countable space~$S$ and having paths in $D(\R_+,S)$ is
said to be \emph{stable}, if for any $\epsilon>0$ there exists a finite set~$K$ such that
\begin{equation}
 \label{eq:defStability}
 \pr(X(t) \notin K) \le \epsilon \quad \text{for all~$t$},
\end{equation}
and otherwise $X$ is said to be \emph{unstable}. Further, the process $X$ is called
\emph{transient}, if $X(t)\to \infty$ almost surely, that is, for any finite set~$K$,
\[
\pr(\bigcup_{s \ge 0} \bigcap_{t \ge s} \{X(t) \notin K\}) = 1.
\]
An alternative way to express~\reff{eq:defStability} is to say that the family of distributions
$\{\pr\circ X(t)^{-1}\}_{t \ge 0}$ is tight. Observe that an irreducible Markov process is stable
if and only if it is positive recurrent~\cite[Theorem 12.25]{kallenberg2002}. The following
proposition illustrates an intuitively clear relation between transience and instability.

\begin{proposition}
\label{the:transientUnstable}
Any transient stochastic process~$X$ having paths in $D(\R_+,S)$ is unstable.
\end{proposition}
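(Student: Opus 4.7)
The plan is to unfold both definitions and then use monotone continuity of probability to flip the almost-sure eventual escape into a pointwise statement about the marginals $\pr(X(t) \in \cdot\,)$. First I would observe that for any fixed finite $K \subset S$, the events $A_s := \bigcap_{t \ge s} \{X(t) \notin K\}$ are nondecreasing in $s$, so their union is the limit, and the hypothesis $X(t) \to \infty$ a.s.\ gives $\pr(\bigcup_{s\ge 0} A_s) = 1$. By continuity of $\pr$ along nondecreasing sequences, this yields $\pr(A_s) \to 1$ as $s \to \infty$.

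Next I would translate this back to the marginals. For every $s$ and every $t \ge s$ one has $A_s \subseteq \{X(t) \notin K\}$, hence $\pr(X(t) \notin K) \ge \pr(A_s)$. Fixing any $\delta \in (0,1)$, pick $s$ with $\pr(A_s) \ge 1-\delta$; then $\pr(X(t) \notin K) \ge 1-\delta$ for all $t \ge s$. Since $K$ was an arbitrary finite set and $\delta$ was arbitrary, for every finite $K$ and every $\delta > 0$ there exist arbitrarily large times $t$ with $\pr(X(t) \notin K) \ge 1-\delta$.

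Finally I would negate the stability definition. Stability requires that for every $\epsilon > 0$ some finite $K$ gives $\pr(X(t) \notin K) \le \epsilon$ uniformly in $t$. Taking for instance $\epsilon = 1/2$ and applying the previous paragraph with $\delta = 1/4$, every candidate finite $K$ fails: there is some $t$ with $\pr(X(t) \notin K) \ge 3/4 > 1/2$. Thus $X$ is not stable.

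There is no real obstacle here; the only care needed is getting the quantifiers straight when passing from the a.s.\ eventual exit statement to a uniform-in-$t$ tightness failure, and in particular noticing that the nested intersection/union structure in the definition of transience is exactly what is required to invoke continuity of probability from below.
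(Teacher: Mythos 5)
Your proof is correct and follows essentially the same route as the paper: fix a finite $K$, use monotone continuity of probability on the nondecreasing events $A_s=\bigcap_{t\ge s}\{X(t)\notin K\}$ to get $\pr(A_s)$ close to $1$, deduce $\pr(X(t)\notin K)\ge \pr(A_s)$ for $t\ge s$, and conclude that no finite $K$ can witness stability. The paper simply compresses this by taking $\pr(A_s)>1/2$ for a single $s$ rather than letting $\delta\to 0$, but the argument is the same.
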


\begin{proof}
If $X$ is transient, then for any finite set~$K$ there exists an~$s$ such that $\pr(\cap_{t \ge s}
\{X(t) \notin K\}) > 1/2$. Hence, $\sup_t \pr(X(t) \notin K) > 1/2$ for all finite $K$, so $X$
cannot be stable.
\end{proof}

In most applications it is natural to assume that the Markov process describing the system is
irreducible, in which case stability is equivalent to the existence of a unique stationary
distribution. In Section~\ref{sec:queues}, where the service rates of the original system are
modified in various ways, it may happen that some of the modified Markov processes are not
irreducible. This is why we need the following result to guarantee the existence of a stationary
distribution for a stable multiclass birth and death process under slightly weaker than usual
assumptions on the reachability of states. We denote by $X[x]$ the version of a Markov process $X$
started in state $x$.

\begin{proposition}
\label{the:stationaryDistribution}
Let $X$ be a $N$-class birth and death process with strictly positive birth rates~$\lambda_i$ and
bounded death rates $\phi_i(x)$. Then the following are equivalent:
\begin{enumerate}[(i)]
\item $X[x]$ is stable for some initial state~$x$.
\item $X[x]$ is stable for all initial states~$x$.
\item $X$ has a unique stationary distribution~$\pi$ supported on
a set~$C$ such that $\pr_x(X(t) \in \cdot) \to \pi$ in total variation for all initial states~$x$,
and $X[x]$ is irreducible and positive recurrent for any $x \in C$.
\end{enumerate}

Moreover, $X$ is unstable if and only if $X(t) \to \infty$ in probability, regardless of the
initial state.
\end{proposition}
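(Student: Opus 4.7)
My approach is to prove (i) $\Rightarrow$ (iii) directly, since the implications (iii) $\Rightarrow$ (ii) $\Rightarrow$ (i) are immediate from the definitions; the moreover part then follows by reading off the long-time behavior of $X$ from the absence of a positive recurrent class.

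Starting from the assumption that $\{\pr(X[x_0](t) \in \cdot)\}_{t \ge 0}$ is tight on $\Z_+^N$, I would apply Prokhorov's theorem to the Cesaro averages $T^{-1}\int_0^T \pr(X[x_0](s) \in \cdot)\,ds$ and use Feller continuity of the countable-state semigroup to extract a subsequential weak limit $\pi$ that is stationary. The stationarity equation together with $\lambda_i > 0$ forces the support $C = \mathrm{supp}(\pi)$ to be upward closed in the product order: $\pi(x) > 0$ implies $\pi(x + e_i) > 0$ for every $i$, and hence $\pi(y) > 0$ for every $y \ge x$. Uniqueness of $\pi$ then follows from a componentwise-maximum argument: given two stationary distributions with supports $C_1, C_2$ and any $x_1 \in C_1$, $x_2 \in C_2$, the point $x_1 \vee x_2$ lies in $C_1 \cap C_2$ by upward closure, so the recurrent communicating classes underlying $\pi_1$ and $\pi_2$ must coincide. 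This yields a single positive recurrent class $C$ on which $\pi$ is the unique invariant probability and on which $X[x]$ is irreducible and positive recurrent.

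The hard part will be establishing $\pr_x(X(t) \in \cdot) \to \pi$ in total variation for every initial state $x$, not only for $x \in C$. My plan is first to show $\pr_x(\tau_C < \infty) = 1$, and then to combine this with the strong Markov property, positive recurrence on $C$, and Scheff\'e's lemma (pointwise convergence of probabilities on a countable space with preserved total mass implies total variation convergence), writing $\pr_x(X(t) = y) = \pr_x(X(t) = y,\, \tau_C \le t) + \pr_x(X(t) = y,\, \tau_C > t)$ and letting $t \to \infty$. For the reachability statement, the upward closure of $C$ guarantees some $y \in C$ with $y \ge x$, and in the uniformized chain each birth transition has probability at least $\lambda_i/(\sum_j \lambda_j + \sum_j \sup_x \phi_j(x)) > 0$ at every jump, so a prescribed sequence of $|y - x|_1$ consecutive births carries $x$ to $y$ with positive probability. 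The delicate step is upgrading this single-shot estimate to probability one; here I would exploit that $C$ is the unique closed positive recurrent class, so any sample path avoiding $C$ forever must drift to infinity through transient states, and iterate the one-shot reachability bound together with a Borel--Cantelli argument to rule out such escape almost surely.

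For the moreover part, one direction is immediate: if $X(t) \to \infty$ in probability then $\pr_x(X(t) \in K) \to 0$ for every finite $K$, which precludes any uniform-in-$t$ tail bound and hence $X$ is unstable. Conversely, if $X$ is unstable then by the equivalence above no positive recurrent class exists, so by the classification of countable-state Markov chains every state lies in either a null-recurrent class or in the transient part; in both cases the classical long-time estimate $\pr_x(X(t) = y) \to 0$ holds for each fixed $y$, and summing over the finitely many $y \in K$ yields $\pr_x(X(t) \in K) \to 0$, as required, independently of the initial state.
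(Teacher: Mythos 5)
Your proposal is correct, and it reaches the same structural heart of the matter as the paper --- namely that strictly positive birth rates make $x\le y$ imply $x\to y$, so that absorbing/recurrent sets are upward closed, which forces a unique positive recurrent class reachable from everywhere --- but it gets there by a different route. The paper works in the opposite order: it first classifies communicating classes (a class is absorbing iff it is increasing), shows via its Lemma~\ref{the:increasingSets} that if no absorbing class exists the process is transient, deduces a unique absorbing class $C$ from stability, proves stability of the restriction $X^C$, and only then obtains $\pi$ from positive recurrence of $X^C$. You instead manufacture $\pi$ first by Krylov--Bogolyubov (Ces\`aro averages plus Prokhorov), and read off the class structure from $\mathrm{supp}(\pi)$; your componentwise-maximum uniqueness argument is sound provided you add the standard fact that recurrent states satisfy ``$x\to z$ implies $z\to x$'', so that the upward-closed support is a single class. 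Your treatment of the ``moreover'' part (no positive recurrent class, hence $p_t(x,y)\to 0$ for every $y$ by the first-passage decomposition) is if anything cleaner than the paper's.

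The one place where your sketch is thinner than it should be is the step $\pr_x(\tau_C<\infty)=1$. ``Borel--Cantelli to rule out escape to infinity'' is not the right mechanism; what makes the iteration close is a \emph{uniform-in-state} one-shot bound, and that is exactly what upward closure supplies: since $re\in C$ for some $r$, from \emph{any} current state $z$ a block of at most $rN$ prescribed births lands in $C$, and each such birth has jump-chain probability at least $\min_i\lambda_i/(\sum_j\lambda_j+N\|\phi\|)$, giving a constant $\delta>0$ independent of $z$. Conditioning block by block then yields $\pr_x(\tau_C>\text{$M$ blocks})\le(1-\delta)^M\to 0$, which is the paper's Lemma~\ref{the:increasingSets}. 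You have all the ingredients for this, but the uniformity over the evolving current state (not just over the initial state $x$) is the point that must be made explicit; once it is, the rest of your argument goes through.
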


\begin{proof}
See Appendix~\ref{sec:stationaryMeasure}.
\end{proof}

The following stability characterization of vector-valued stochastic processes is well-known.
Because the proof is short, we give it here for completeness.
\begin{proposition}
\label{the:stableMarginals}
A stochastic process $X = (X_1,\dots,X_N)$ taking values in a countable space $S_1 \times \dots
\times S_N$ is stable if and only if $X_i$ is stable for each~$i$.
\end{proposition}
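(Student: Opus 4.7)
The plan is to use tightness directly, since stability as defined in \reff{eq:defStability} is precisely tightness of the one-dimensional distributions. Both directions reduce to elementary set-theoretic manipulations, with the main tool being the union bound for the harder (sufficiency) direction and projection for the easier (necessity) direction.

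For the necessity direction (stability of $X$ implies stability of each $X_i$), I would fix $\epsilon>0$ and pick a finite $K\subseteq S_1\times\dots\times S_N$ witnessing \reff{eq:defStability} for $X$. The projection $K_i$ of $K$ onto the $i$-th coordinate is a finite subset of $S_i$, and the inclusion $\{X(t)\in K\}\subseteq\{X_i(t)\in K_i\}$ immediately yields
\[
 \pr(X_i(t)\notin K_i)\le\pr(X(t)\notin K)\le\epsilon \quad\text{for all $t$},
\]
so $X_i$ is stable.

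For the sufficiency direction, I would fix $\epsilon>0$ and, using stability of each $X_i$, pick finite sets $K_i\subseteq S_i$ with $\pr(X_i(t)\notin K_i)\le\epsilon/N$ for all $t$. Taking the finite product set $K=K_1\times\dots\times K_N$ and applying the union bound,
\[
 \pr(X(t)\notin K)\le\sum_{i=1}^N\pr(X_i(t)\notin K_i)\le\epsilon,
\]
which establishes \reff{eq:defStability} for $X$.

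There is no real obstacle here; the only thing to keep an eye on is ensuring that the witnessing sets one constructs are genuinely finite (which is immediate since finite projections and finite products of finite sets are finite) and that the two bounds hold uniformly in $t$, which is built into the hypotheses.
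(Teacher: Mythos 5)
Your proof is correct and follows essentially the same route as the paper: projection of the witnessing set onto each coordinate (the paper enlarges $K$ to a containing rectangle, which is the same idea) for necessity, and the union bound over a finite product set for sufficiency. No gaps.
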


\begin{proof}
First assume that $X$ is stable. Given $\epsilon>0$, let us fix a finite set~$K$ such that $\sup_t
\pr(X(t) \notin K) \le \epsilon$, and choose a finite rectangle $K_1 \times \dots \times K_N$ that
contains~$K$. Then for any~$i$ and all~$t$, $\pr(X_i(t) \notin K_i) \le \pr( X(t) \notin K)$, so
it follows that $X_i$ is stable.

For the reverse direction, it suffices to observe that for an arbitrary finite set $K = K_1 \times
\dots \times K_N$, $\pr(X(t) \notin K) \le \sum_i \pr(X_i \notin K_i)$.
\end{proof}

\section{Multiclass birth and death processes}
\label{sec:mbd}

\subsection{Stochastic comparison}
\label{sec:comparison}

When $X$ and $Y$ are random elements taking values in a partially ordered measurable space, we
denote $X \lest Y$ and say that $X$ is \emph{stochastically less} than $Y$, if $\E f(X) \le \E
f(Y)$ for all positive increasing measurable functions. We use the terms increasing and positive
in the weak sense, so that a function $f$ is increasing, if $f(x) \le f(y)$ for all $x \le y$, and
positive if $f(x)\ge 0$ for all $x$. Moreover, we denote $X \eqst Y$, if the distributions of $X$
and $Y$ are equal.

Let us endow the spaces $\Z_+^N$ and $D(\R_+,\Z_+^N)$ with the usual coordinate-wise partial
orders, so that $x \le y$ in $\Z_+^N$ if and only if $x_i \le y_i$ for all~$i$; and $x\le y$ in
$D(\R_+,\Z_+^N)$ if and only if $x_i(t) \le y_i(t)$ for all $i$ and $t$. Recall that by Strassen's
theorem~\cite[Theorem 1]{kamae1977}, the stochastic processes $X$ and $Y$ having paths in
$D(\R_+,\Z_+^N)$ satisfy $X \lest Y$ if and only if there exist processes $\tilde X$ and $\tilde
Y$ defined on a common probability space such that $\tilde X \eqst X$, $\tilde Y \eqst Y$, and
$\tilde X_i(t) \le \tilde Y_i(t)$ for all $i$ and $t$ almost surely. The following result
indicates a fundamental relation between stochastic ordering and stability.

\begin{proposition}
\label{the:comparisonStability}
Let $X$ and $Y$ be stochastic processes with paths in $D(\R_+,\Z_+^N)$ and assume that $X \lest
Y$.
\begin{enumerate}[(i)]
\item If $X$ is transient, then so is~$Y$.
\item If $Y$ is stable, then so is~$X$.
\end{enumerate}
\end{proposition}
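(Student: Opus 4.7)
The plan is to invoke Strassen's theorem (already cited in the text) and reduce both parts to a pathwise argument on a single probability space, using the fact that a finite subset of $\Z_+^N$ has a finite downward closure under the coordinate-wise order.

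By Strassen's theorem, I may work with a coupling $(\tX, \tY)$ on a common probability space such that $\tX \eqst X$, $\tY \eqst Y$, and almost surely $\tX_i(t) \le \tY_i(t)$ for every $i$ and every $t \ge 0$. For any finite $K \subset \Z_+^N$, let $K^\downarrow = \{z \in \Z_+^N : z \le y \text{ for some } y \in K\}$ denote its downward closure. Since the coordinates of elements of $K^\downarrow$ are bounded by $\max_{y \in K} \max_i y_i$, the set $K^\downarrow$ is again finite. The key observation is that on the coupling, whenever $\tY(t) \in K$, the inequality $\tX(t) \le \tY(t)$ forces $\tX(t) \in K^\downarrow$; equivalently,
\[
 \{\tX(t) \notin K^\downarrow\} \subseteq \{\tY(t) \notin K\}.
\]

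For (ii), suppose $Y$ is stable and fix $\epsilon > 0$. Choose a finite $K$ with $\sup_t \pr(Y(t) \notin K) \le \epsilon$. Then for every $t \ge 0$,
\[
 \pr(X(t) \notin K^\downarrow) = \pr(\tX(t) \notin K^\downarrow) \le \pr(\tY(t) \notin K) = \pr(Y(t) \notin K) \le \epsilon,
\]
so the finite set $K^\downarrow$ witnesses stability of $X$.

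For (i), suppose $X$ is transient and let $L \subset \Z_+^N$ be an arbitrary finite set. By transience applied to $\tX$, the event $\bigcup_{s \ge 0} \bigcap_{t \ge s} \{\tX(t) \notin L^\downarrow\}$ has probability one. By the containment displayed above, on this event we also have $\bigcap_{t \ge s} \{\tY(t) \notin L\}$ for the same $s$, hence $\pr(\bigcup_{s \ge 0} \bigcap_{t \ge s} \{\tY(t) \notin L\}) = 1$. Since $L$ was arbitrary, $\tY$ (and therefore $Y$) is transient.

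There is no real obstacle here beyond the two elementary observations that downward closures of finite sets in $\Z_+^N$ are finite and that monotonicity on the coupling transfers set-avoidance in one direction. The only place where one has to be a bit careful is noting that for transience it is not enough to verify avoidance of $L$ itself; one must pass through $L^\downarrow$ so that the almost sure statement for $\tX$ can be applied.
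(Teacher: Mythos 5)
Your proof is correct and follows essentially the same route as the paper: Strassen's coupling plus the observation that coordinate-wise domination transfers avoidance of (downward-closed) finite sets from the smaller to the larger process. The only cosmetic difference is in part (ii), where the paper skips the coupling entirely and applies the stochastic order directly to the increasing indicator $1(|x|>r)$, using balls $\{x:|x|\le r\}$ in place of your downward closures $K^\downarrow$.
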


\begin{proof}
The first claim is a direct consequence of Strassen's theorem, while the second follows directly
from the definition of stability, because $\pr(|X(t)| > r) \le \pr(|Y(t)| > r)$ for all~$r$
and~$t$.
\end{proof}

A Markov process having paths in $D(\R_+,\Z_+^N)$ is called a \emph{multiclass birth and death
process}, if its transitions are given by
\begin{align*}
 x &\mapsto x + e_i \quad \text{at rate $\lambda_i(x)$}, \\
 x &\mapsto x - e_i \quad \text{at rate $\phi_i(x) 1(x_i>0)$},
\end{align*}
where $\lambda_i(x)$ and $\phi_i(x)$ are some positive functions on $\Z_+^N$, called the class-$i$
birth rates and death rates, respectively. The following lemma, which is proved using a direct
coupling construction, gives a sufficient condition for the comparability of two multiclass birth
and death processes.

\begin{lemma}
\label{the:comparison}
Let $X=(X_1,\dots,X_I)$ and $Y=(Y_1,\dots,Y_J)$ be multiclass birth and death processes such that
$X$ has birth rates $\lambda_i(x)$ and death rates $\phi_i(x)$, and $Y$ has birth rates
$\eta_j(y)$ and death rates $\psi_j(y)$. Assume that for all $i = 1,\dots,I\wedge J$, and all $x
\in \Z_+^I$ and $y \in \Z_+^J$ such that $x_i = y_i$ and $(x_1,\dots,x_{I \wedge J}) \le
(y_1,\dots,y_{I \wedge J})$,
\begin{align}
\lambda_i(x) &\le \eta_i(y), \label{eq:comparisonUp} \\
\phi_i(x)    &\ge \psi_i(y). \label{eq:comparisonDown}
\end{align}
Then for all $x \in \Z_+^I$ and $y \in \Z_+^J$ such that $(x_1,\dots,x_{I \wedge J}) \le
(y_1,\dots,y_{I \wedge J})$,
\[
(X_1[x],\dots,X_{I \wedge J}[x]) \lest (Y_1[y],\dots,Y_{I \wedge J}[y]),
\]
where $X[x]$ and $Y[y]$ are versions of~$X$ and~$Y$ started in~$x$ and $y$, respectively.
\end{lemma}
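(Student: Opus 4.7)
By the Strassen representation cited just above the lemma, it suffices to construct a coupling: a process $(\tilde X, \tilde Y)$ on a common probability space whose marginals agree in distribution with $X[x]$ and $Y[y]$, and whose sample paths satisfy $\tilde X_i(t) \le \tilde Y_i(t)$ for all $i \le I \wedge J$ and all $t \ge 0$ almost surely. I will build $(\tilde X, \tilde Y)$ as a Markov process on $\Z_+^I \times \Z_+^J$ by specifying a coupled generator, arranging transitions so that the ordered set
\[
\mathcal{O} = \{(x,y) \in \Z_+^I\times\Z_+^J : x_i \le y_i \text{ for all } i \le I \wedge J\}
\]
is absorbing, and then starting the coupling inside $\mathcal{O}$.

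\textbf{The coupling.} For each coordinate $i \le I \wedge J$, I split the independent up/down events of the two marginals as follows, depending on the current state $(x,y) \in \mathcal{O}$. If $x_i < y_i$, the $i$-th coordinates of $\tilde X$ and $\tilde Y$ evolve independently at their natural rates $\lambda_i(x), \phi_i(x) \mathbf{1}(x_i > 0)$ and $\eta_i(y), \psi_i(y) \mathbf{1}(y_i > 0)$. If $x_i = y_i$, I use a synchronized scheme: both coordinates jump up together at rate $\lambda_i(x)$, only $\tilde Y_i$ jumps up at rate $\eta_i(y) - \lambda_i(x)$, both jump down together at rate $\psi_i(y)\mathbf{1}(y_i>0)$, and only $\tilde X_i$ jumps down at rate $(\phi_i(x)-\psi_i(y))\mathbf{1}(x_i>0)$. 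The splits are nonnegative precisely because the hypotheses \reff{eq:comparisonUp}--\reff{eq:comparisonDown} apply at any $(x,y) \in \mathcal{O}$ with $x_i = y_i$. The remaining coordinates ($i > I \wedge J$ in $\tilde X$ or in $\tilde Y$) evolve independently at their natural rates, since they play no role in the partial order.

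\textbf{Verifications.} I then need to check two things. First, invariance of $\mathcal{O}$: inspect each transition at a state $(x,y) \in \mathcal{O}$. The only situations that could violate order are a solo upjump of $\tilde X_i$ or a solo downjump of $\tilde Y_i$ at a boundary state $x_i = y_i$, and the coupling rules above forbid exactly these. All other jumps manifestly leave $\mathcal{O}$ invariant (a solo $\tilde Y_i$ up move only increases $\tilde Y_i$; a solo $\tilde X_i$ down move only decreases $\tilde X_i$; independent jumps when $x_i < y_i$ can at worst produce $x_i + 1 = y_i$ or $y_i - 1 = x_i$, still in $\mathcal{O}$). Second, correctness of marginals: summing the split rates, the total rate for an up (resp.\ down) transition of $\tilde X_i$ equals $\lambda_i(x)$ (resp.\ $\phi_i(x)\mathbf{1}(x_i>0)$), irrespective of whether $x_i = y_i$ or $x_i < y_i$, and analogously for $\tilde Y$. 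Hence the $\tilde X$-marginal has the generator of $X$, and by uniqueness of Markov processes with bounded rates the distribution of $\tilde X$ started at $x$ equals that of $X[x]$; similarly for $\tilde Y$. Applying Strassen's theorem then yields the stochastic ordering of the first $I \wedge J$ coordinates.

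\textbf{Main obstacle.} The principal difficulty is the boundary bookkeeping: the coupling rules change as the coordinates cross the diagonal $x_i = y_i$, and the rates $\lambda_i, \eta_i, \phi_i, \psi_i$ depend on the full state vector. I have to be careful that the split rates $\eta_i(y) - \lambda_i(x)$ and $\phi_i(x) - \psi_i(y)$ are applied only at states where the lemma's hypotheses guarantee nonnegativity, which is exactly on $\{x_i = y_i\} \cap \mathcal{O}$; and I must also handle the degenerate cases $x_i = y_i = 0$ (no downjump is possible, both splits vanish) without disrupting either invariance or the marginal check.
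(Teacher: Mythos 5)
Your coupling is exactly the one used in the paper: synchronize the $i$-th coordinates on the diagonal $x_i=y_i$ (splitting off the excess rates $\eta_i(y)-\lambda_i(x)$ and $\phi_i(x)-\psi_i(y)$, which are nonnegative by the hypotheses), let them move independently when $x_i<y_i$, treat the coordinates beyond $I\wedge J$ independently, and verify that the ordered set is invariant and the marginals have the right generators before invoking Strassen's theorem. The proposal is correct and takes essentially the same route as the paper's proof.
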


\begin{proof}
Let $(\tX,\tY)$ be the Markov process with paths in $D(\R_+,U)$, where $U = \{(x,y)\in \Z_+^I
\times \Z_+^J: (x_1,\dots,x_{I \wedge J}) \le (y_1,\dots,y_{I \wedge J})\}$, having the upward
transitions
\begin{align*}
(x,y) &\mapsto (x+e_i,y) & \text{at rate} \ & \lambda_i(x),
\quad & i \le {I \wedge J}, \ x_i < y_i, \\
(x,y) &\mapsto (x,y+e_i) & \text{at rate} \ & \eta_i(y),
\quad & i \le {I \wedge J}, \ x_i<y_i, \\
(x,y) &\mapsto (x+e_i,y+e_i) & \text{at rate} \ & \lambda_i(x),
\quad & i \le {I \wedge J}, \ x_i=y_i, \\
(x,y) &\mapsto (x,y+e_i) & \text{at rate} \ & \eta_i(y) - \lambda_i(x),
\quad & i \le {I \wedge J}, \ x_i=y_i, \\
(x,y) &\mapsto (x+e_i,y) & \text{at rate} \ & \lambda_i(x),
\quad & i > {I \wedge J}, \\
(x,y) &\mapsto (x,y+e_i) & \text{at rate} \ & \eta_i(y), \quad & i > {I \wedge J},
\end{align*}
and the downward transitions
\begin{align*}
(x,y) &\mapsto (x-e_i,y) & \text{at rate} \ & \phi_i(x),
\quad & i \le {I \wedge J}, \ 0<x_i<y_i, \\
(x,y) &\mapsto (x,y-e_i) & \text{at rate} \ & \psi_i(y),
\quad & i \le {I \wedge J}, \ 0<x_i<y_i, \\
(x,y) &\mapsto (x-e_i,y-e_i) & \text{at rate} \ & \psi_i(y),
\quad & i \le {I \wedge J}, \ 0<x_i=y_i, \\
(x,y) &\mapsto (x-e_i,y) & \text{at rate} \ & \phi_i(x) - \psi_i(y),
\quad & i \le {I \wedge J}, \ 0<x_i=y_i, \\
(x,y) &\mapsto (x-e_i,y) & \text{at rate} \ & \phi_i(x),
\quad & i > {I \wedge J}, \\
(x,y) &\mapsto (x,y-e_i) & \text{at rate} \ & \psi_i(y), \quad & i > {I \wedge J}.
\end{align*}
In light of~\reff{eq:comparisonUp} and~\reff{eq:comparisonDown}, we see that all transition rates
described above are positive. Moreover, because each of the transitions are mappings from~$U$
into~$U$, we can be assured that the process $(\tX,\tY)$ exists.

By studying the marginals of the transition rates, we see that both $\tX$ and $\tY$ are Markov,
and that their intensity matrices coincide with those of~$X$ and~$Y$, respectively. Hence, for
all~$x$ and~$y$ such that $(x,y) \in U$, we have constructed versions of $X[x]$ and $Y[y]$ on a
common probability space such that $(X_1[x](t),\dots,X_{I \wedge J}[x](t)) \le
(Y_1[y](t),\dots,Y_{I \wedge J}[y](t))$ for all~$t$ almost surely.
\end{proof}

\subsection{Marginal drift conditions}
\label{sec:drift}

In this section we develop necessary and sufficient conditions for the stability of a multiclass
birth and death process, given that each
 coordinate process except one is known to be stable.
The following proposition extends the classical Neuts' mean drift condition~\cite{neuts1978}, see
also Tweedie~\cite{tweedie1982}. The proof follows closely the principles in Section~19 of Meyn
and Tweedie~\cite{meyn1993}.

\begin{proposition}
\label{the:marginalDrift}
Let $X = (X_1,\dots,X_{n+1})$ be an $(n+1)$-class birth and death process with strictly positive
birth rates~$\lambda_i$ and bounded death rates $\phi_i(x)$ such that $\phi_i(x) =
\phi_i(x_1,\dots,x_n)$ only depends on the first~$n$ input arguments for all $i = 1,\dots,n$.
Assume that:
\begin{enumerate}[(i)]
\item The Markov process $X^n=(X_1,\dots,X_n)$ is stable and has
the stationary distribution~$\pi$.
\item The birth rate of $X_{n+1}$ satisfies the condition
\[
\lambda_{n+1} < \sum_{x^n \in \Z_+^n} \left\{\liminf_{x_{n+1} \to \infty} \phi_{n+1}(x^n,x_{n+1})
\right\} \pi(x^n).
\]
\end{enumerate}
Then the process $X = (X^n,X_{n+1})$ is stable.

In particular, if $\phi_{n+1}(x) = \phi_{n+1}(x_{n+1})$ only depends on $x_{n+1}$, then
\[
\lambda_{n+1} < \liminf_{x_{n+1} \to \infty} \phi_{n+1}(x_{n+1})
\]
is sufficient for the stability of $X^{n+1}$, regardless of the initial state.
\end{proposition}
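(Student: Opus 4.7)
My approach is a Foster--Lyapunov drift argument applied to $X$, using a test function that couples $x_{n+1}$ with a Poisson-equation correction encoding the autonomous marginal chain~$X^n$. By Proposition~\ref{the:stationaryDistribution}, stability of $X$ for some initial state implies stability for all, so it suffices to exhibit positive recurrence on the closed irreducible class supporting~$\pi$.

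Let $f(x^n) := \liminf_{x_{n+1}\to\infty}\phi_{n+1}(x^n,x_{n+1})$ and $\bar f := \sum_{x^n} f(x^n)\pi(x^n)$, so that $\gamma := \bar f - \lambda_{n+1} > 0$. Since $\phi_{n+1}$ is bounded, $f$ is bounded; the positive recurrence of $X^n$ then supplies a bounded solution $h:\Z_+^n \to \R$ to the Poisson equation $\cL^n h = f - \bar f$, where $\cL^n$ is the generator of~$X^n$. One may take $h(x^n) = \E_{x^n}\!\int_0^\infty (f(X^n(t)) - \bar f)\,dt$, which converges thanks to the geometric ergodicity of a positive recurrent countable Markov chain acting on a bounded integrand. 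Choosing $V(x) = x_{n+1} + h(x^n)$ and exploiting that $\phi_i$ does not involve $x_{n+1}$ for $i \le n$, I obtain
\[
\cL V(x) = \cL^n h(x^n) + \lambda_{n+1} - \phi_{n+1}(x) 1(x_{n+1}>0) = -\gamma + [f(x^n) - \phi_{n+1}(x) 1(x_{n+1}>0)].
\]
Fixing $\delta \in (0,\gamma)$, the liminf definition of $f$ furnishes, for each $x^n$, a finite threshold $M(x^n)$ such that $\phi_{n+1}(x^n,x_{n+1}) \ge f(x^n) - \delta$ whenever $x_{n+1} > M(x^n)$, which gives $\cL V(x) \le -(\gamma-\delta) < 0$ on the set $\{x_{n+1} > M(x^n)\}$.

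The main obstacle is that $M(x^n)$ need not be uniformly bounded in $x^n$, so the set where the pointwise drift inequality fails is not contained in a finite set, as Foster's criterion requires. I would remedy this by augmenting the test function with a Foster--Lyapunov function $U \ge 0$ for the positive recurrent chain~$X^n$ (e.g.\ a mean hitting time to a reference state), satisfying $\cL^n U \le -1$ off a finite set $B \subset \Z_+^n$ and $\cL^n U \le b$ on $B$. Using $\tilde V(x) = \beta x_{n+1} + \beta h(x^n) + \alpha U(x^n)$ with coefficients $\alpha,\beta>0$ balanced so that the contribution $\alpha \cL^n U \le -\alpha$ absorbs the bounded residual $\beta[f - \phi_{n+1}]$ off~$B$, while on the finite set~$B$ the uniform threshold $M_B := \max_{x^n \in B} M(x^n) < \infty$ handles the $x_{n+1}$-direction, I arrive at $\cL \tilde V(x) \le -1$ outside the finite exceptional set $B \times \{0,\dots,M_B\}$. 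Foster's criterion then delivers positive recurrence of $X$, equivalently stability by Proposition~\ref{the:stationaryDistribution}.

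The special case where $\phi_{n+1}$ depends only on $x_{n+1}$ requires no Poisson correction: the coordinate process $X_{n+1}$ is then an autonomous one-dimensional birth--death chain, and $\lambda_{n+1} < \liminf_{x_{n+1}\to\infty}\phi_{n+1}(x_{n+1})$ is the classical Foster stability condition for it, yielding the claim irrespective of the state of~$X^n$.
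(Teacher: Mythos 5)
Your route is genuinely different from the paper's. The paper never builds a Lyapunov function for the joint chain: it integrates Dynkin's formula for $V(x)=x_{n+1}$ to get $\limsup_t \E_x[\lambda_{n+1}-\phi_{n+1}(X(t))1(X_{n+1}(t)>0)]\ge 0$, then argues by contradiction --- if $X$ were unstable, $X_{n+1}(t)\to\infty$ in probability while $X^n(t)\to\pi$ weakly, and a Fatou-type lemma (Lemma~\ref{the:limsupBound}) pushes the $\limsup$ below $\sum_{x^n}\{\lambda_{n+1}-\liminf_{x_{n+1}}\phi_{n+1}\}\pi(x^n)<0$. That lemma is precisely what absorbs the non-uniformity of the $\liminf$ in $x^n$, the obstacle you correctly identified but, in my view, do not overcome.

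Two steps of your construction are genuine gaps. First, the boundedness of the Poisson-equation solution $h$: a positive recurrent countable chain is \emph{not} in general geometrically ergodic, and $\E_{x^n}\int_0^\infty(f(X^n(t))-\bar f)\,dt$ need not even converge (under polynomial mixing the integrand can decay like $t^{-1}$). The standard solution $h(x^n)=\E_{x^n}\int_0^{\tau_{x_0}}(f(X^n(s))-\bar f)\,ds$ is finite but only bounded by a multiple of $\E_{x^n}\tau_{x_0}$, which is typically unbounded in $x^n$; then $V=x_{n+1}+h(x^n)$ is not bounded below and Foster's criterion does not apply. Second, the $\alpha,\beta$ balancing is internally inconsistent. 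Off $B$ you need $\alpha$ to dominate the residual, roughly $\alpha\ge 2\beta\|\phi\|+1$; but on $B$ with $x_{n+1}>M_B$ the drift is at best $\alpha b-\beta(\gamma-\delta)$, so you need $\beta(\gamma-\delta)>\alpha b+1\ge (2\beta\|\phi\|+1)b+1$, i.e.\ $\gamma-\delta>2\|\phi\|b$ --- a condition not implied by the hypotheses (and typically false, since $\gamma\le\|\phi\|$). No choice of $\alpha,\beta$ satisfies both requirements in general; this is exactly the known failure mode of linear Lyapunov combinations for Markov-modulated queues with an infinite modulating state space, and is why the paper takes the averaged-drift route instead. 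Your treatment of the special case ($\phi_{n+1}$ depending only on $x_{n+1}$) is fine.
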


To prove the above proposition, we use the following lemma:

\begin{lemma}
\label{the:limsupBound}
Let $(X,Y)$ be a stochastic process with values in $S \times \Z_+$, where $S$ is countable. Assume
that
\begin{enumerate}[(i)]
\item $X_t \to \pi$ weakly for some probability distribution~$\pi$
on~$S$,
\item $Y_t \to \infty$ in probability.
\end{enumerate}
Then for all bounded $f$,
\begin{equation}
\label{eq:limsupBound}
\limsup_{t \to \infty} \E f(X_t,Y_t) \le \sum_x \left\{\limsup_{y \to \infty} f(x,y) \right\}
\pi(x).
\end{equation}
\end{lemma}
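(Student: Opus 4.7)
Proof plan: Let $g(x) := \limsup_{y\to\infty} f(x,y)$, which is bounded since $f$ is bounded, and let $C$ be such that $|f|\le C$. The right-hand side is $\sum_x g(x)\pi(x)$, a convergent sum.

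First I would reduce to the case where $f$ (and hence $g$) is nonpositive, by replacing $f$ with $\tilde f := f-C$; this does not affect the claim, since both sides shift by $-C$. This translation is important because in the decomposition below we shall need to discard contributions on a ``bad'' event, and this is only legitimate when the integrand is $\le 0$.

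The main argument runs as follows. Fix $\epsilon>0$. Since $\pi$ is a probability measure on the countable set $S$, choose a finite $F\subset S$ with $\pi(F^c)<\epsilon$. For each $x\in F$, by the definition of $\limsup$, there is an integer $M(x)$ with $\tilde f(x,y)\le \tilde g(x)+\epsilon$ for all $y\ge M(x)$; because $F$ is finite, $M:=\max_{x\in F}M(x)$ is finite. Let $A_t := \{X_t\in F,\ Y_t\ge M\}$. Then
\[
\E \tilde f(X_t,Y_t)
\;=\; \E[\tilde f(X_t,Y_t);A_t] + \E[\tilde f(X_t,Y_t);A_t^c]
\;\le\; \E[(\tilde g(X_t)+\epsilon)1_{A_t}],
\]
where the $A_t^c$ term was dropped because $\tilde f\le 0$. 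Next I would write $1_{A_t} = 1_{\{X_t\in F\}} - 1_{\{X_t\in F,\ Y_t<M\}}$ and note that $|\tilde g|\le 2C$, so
\[
\E[\tilde g(X_t)1_{A_t}] \;\le\; \E[\tilde g(X_t)1_{\{X_t\in F\}}] + 2C\,\pr(Y_t<M).
\]
Since $M$ is fixed and $Y_t\to\infty$ in probability, $\pr(Y_t<M)\to 0$. Since $F$ is finite and $X_t\to\pi$ weakly on the discrete space $S$, $\pr(X_t=x)\to\pi(x)$ for each $x\in F$, so $\E[\tilde g(X_t)1_{\{X_t\in F\}}] \to \sum_{x\in F}\tilde g(x)\pi(x)$. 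Combining,
\[
\limsup_{t\to\infty} \E \tilde f(X_t,Y_t)
\;\le\; \sum_{x\in F}\tilde g(x)\pi(x) + \epsilon
\;\le\; \sum_{x\in S}\tilde g(x)\pi(x) + 2C\,\pi(F^c) + \epsilon
\;\le\; \sum_{x\in S}\tilde g(x)\pi(x) + (2C+1)\epsilon,
\]
using $|\tilde g|\le 2C$ on $F^c$. Letting $\epsilon\downarrow 0$ and adding $C$ back yields~\reff{eq:limsupBound}.

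The only real subtlety is sign-handling: one cannot directly upper-bound $\tilde g(X_t)1_{A_t}$ by $\tilde g(X_t)1_{\{X_t\in F\}}$ without knowing the sign of $\tilde g$, but the symmetric estimate via $|\tilde g|\le 2C$ and $\pr(Y_t<M)\to 0$ bypasses this. The rest is the standard finite-truncation plus weak-convergence argument.
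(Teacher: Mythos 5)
Your proof is correct and follows essentially the same route as the paper's: choose a finite set of $x$-values with small complementary $\pi$-mass, use a uniform threshold $M$ beyond which $f(x,y)\le \limsup_y f(x,y)+\epsilon$, split on whether $Y_t$ exceeds it, and invoke weak convergence of $X_t$ together with $Y_t\to\infty$ in probability. The paper organizes this as a finite-$S$ case followed by a truncation $f_K(x,y)=f(x,y)1(x\in K)$, and controls signs via an explicit $2\|f\|\pr(Y_t\le r)$ error term rather than your normalization $f\mapsto f-C$, but these are only bookkeeping differences.
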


\begin{proof}
Assume first that $S$ is finite, and let $\barf(x) = \limsup_{y \to \infty} f(x,y)$. Then for any
$\epsilon>0$ there exists an~$r$ such that $f(x,y) \le \barf(x) + \epsilon$ for all $x$ and all $y
> r$. It follows that
\begin{align*}
 \E f(X_t,Y_t)
 &\le \E (\barf(X_t) + \epsilon) 1(Y_t > r) + \E f(X_t,Y_t) 1(Y_t \le r) \\
 &=   \E \barf(X_t) + \epsilon + \E [ f(X_t,Y_t) - \barf(X_t) - \epsilon ] 1(Y_t \le r) \\
 &\le \E \barf(X_t) + \epsilon + 2||f|| \pr(Y_t \le r),
\end{align*}
where $||f|| = \sup_{x,y} |f(x,y)|$. By letting $t \to \infty$ and recalling that $\epsilon$ is
arbitrary, we see that the claim holds for a finite set~$S$.

If $S$ is countably infinite, then for any finite set~$K$, the claim holds for the function
$f_K(x,y) = f(x,y) 1(x \in K)$. Hence, because $\barf_K(x) = \barf(x) 1(x \in K)$, we get
\[
\limsup_{t \to \infty} \E f(X_t,Y_t) \le \sum_x \barf(x) \, \pi(x) + 2 ||f|| \pi(K^c).
\]
Thus the claim follows, because we can make $\pi(K^c)$ arbitrarily small by choosing $K$ large
enough.
\end{proof}

\begin{proof}[Proof of Proposition~\ref{the:marginalDrift}]
Let us define $V(x) = x_{n+1}$ and denote the mean drift of~$V$ with respect to~$X$ by
\[
\Delta V(x) = \lambda_{n+1} - \phi_{n+1}(x)1(x_{n+1}>0).
\]
Let us also define $V_M(x) = V(x) 1(x_{n+1} \le M)$ for $M>0$. Then by Kolmogorov's forward
equation \cite[Theorem 19.6]{kallenberg2002} we have
\[
\int_0^t \E_x \Delta V_M(X(s)) \, ds = \E_x V_M(X(t)) - V_M(x).
\]
Because $V_M \to V$ and $\Delta V_M \to \Delta V$ pointwise as $M \to \infty$, and because
$||\Delta V_M|| \le \lambda + ||\phi||$, we see by applying dominated convergence on the left-hand
side, and monotone convergence on the right, that
\[
\int_0^t \E_x \Delta V(X(s)) \, ds = \E_x V(X(t)) - V(x).
\]
Because $V$ is positive, this implies that
\[
\limsup_{t \to \infty} \, t^{-1} \! \int_0^t \E_x \Delta V(X(s)) \, ds \ge 0,
\]
and consequently,
\[
\limsup_{t \to \infty} \E_x \Delta V(X(t)) \ge 0.
\]

Now assume that the process~$X$ is unstable. Then Proposition~\ref{the:stationaryDistribution}
implies that $X(t) \to \infty$ in probability. Because $X^n$ is stable, it follows that
$X_{n+1}(t) \to \infty$ in probability, and we may conclude by virtue of
Lemma~\ref{the:limsupBound} that
\[
\sum_{x^n \in \Z_+^n} \left\{\lambda - \liminf_{x_{n+1} \to \infty} \phi(x^n,x_{n+1}) \right\}
\pi(x^n) \ge \limsup_{t \to \infty} \E_x \Delta V(X(t)) \ge 0.
\]
\end{proof}

To prove the following converse of Proposition~\ref{the:marginalDrift}, we make the additional
assumption that $\phi_{n+1}$ only depends on the first $n$ input arguments. Hence $(X^n,X_{n+1})$
is a Markov additive process, where the state space of the modulating process $X^n$ is countably
infinite. For Markov additive processes where the modulating process only takes on finitely many
values, the following kind of result is well-known~\cite[Proposition~XI.2.11]{asmussen2003}.

\begin{proposition}
\label{the:marginalDrift2}
Let $X = (X_1,\dots,X_{n+1})$ be an $(n+1)$-class birth and death process with strictly positive
birth rates~$\lambda_i$ and bounded death rates $\phi_i(x)$ such that $\phi_i(x) =
\phi_i(x_1,\dots,x_n)$ only depends on the first~$n$ input arguments for all $i = 1,\dots,n+1$.
Assume that:
\begin{enumerate}[(i)]
\item The Markov process $X^n = (X_1,\dots,X_n)$ is stable and has
the stationary distribution~$\pi$.
\item The birth rate of $X_{n+1}$ satisfies the condition
\[
\lambda_{n+1} > \sum_{x^n \in \Z_+^n} \phi_{n+1}(x^n) \, \pi(x^n).
\]
\end{enumerate}
Then the process $X_{n+1}$ is transient, regardless of the initial state.
\end{proposition}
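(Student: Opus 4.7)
The key structural fact is that since $\phi_{n+1}$ depends only on the first $n$ coordinates, the pair $(X^n,X_{n+1})$ is a Markov additive process: given the trajectory of the modulating process $X^n$, the coordinate $X_{n+1}$ evolves as a pure birth--death process on $\Z_+$ with constant birth rate $\lambda_{n+1}$ and time-varying death rate $\phi_{n+1}(X^n(t))\,1\{X_{n+1}(t)>0\}$. My plan is to realize $X_{n+1}$ pathwise from a Poisson arrival stream and a thinned doubly stochastic Poisson departure stream, then invoke the ergodic theorem for $X^n$ to identify the long-run drift of the dominating random walk.

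Concretely, let $A(t)$ be a Poisson process of rate $\lambda_{n+1}$ counting class-$(n+1)$ arrivals, and let $\tilde D(t)=N(\Lambda(t))$ be a doubly stochastic Poisson process with stochastic intensity $\phi_{n+1}(X^n(t))$, built via the time change $\Lambda(t)=\int_0^t \phi_{n+1}(X^n(s))\,ds$ from an independent standard Poisson process $N$. Obtain $X_{n+1}$ by starting from $X_{n+1}(0)$, incrementing at jumps of $A$ and decrementing at those jumps of $\tilde D$ that occur while $X_{n+1}>0$. Since some potential departures are lost at the boundary, this construction yields the pathwise lower bound
\[
X_{n+1}(t) \ge X_{n+1}(0) + A(t) - \tilde D(t).
\]

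To control the right-hand side, I would apply Proposition~\ref{the:stationaryDistribution} to conclude that $X^n$ is irreducible and positive recurrent on the support $C$ of $\pi$, and from any initial state enters $C$ in an almost surely finite time. The classical ergodic theorem for irreducible positive recurrent Markov chains then gives, for any initial state,
\[
\frac{1}{t}\int_0^t \phi_{n+1}(X^n(s))\,ds \to \bar\phi := \sum_{x^n \in \Z_+^n}\phi_{n+1}(x^n)\,\pi(x^n) \quad \text{a.s.}
\]
Combining this with the Poisson strong law $N(u)/u\to 1$ a.s.\ applied to $u=\Lambda(t)$ (the degenerate case $\bar\phi=0$ being even easier, since then $\tilde D(t)/t\to 0$), I get $\tilde D(t)/t\to \bar\phi$ a.s.; together with $A(t)/t\to\lambda_{n+1}$ a.s.\ this yields
\[
\liminf_{t\to\infty}\frac{X_{n+1}(t)}{t}\ge \lambda_{n+1}-\bar\phi>0 \quad \text{a.s.},
\]
so $X_{n+1}(t)\to\infty$ almost surely, which is the claim.

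The main delicate point is justifying the time-average ergodic theorem from an arbitrary initial state, because Proposition~\ref{the:stationaryDistribution} permits $\pi$ to be supported on a proper subset $C$ of $\Z_+^n$. The remedy is to observe that the total variation convergence $\pr_x(X^n(t)\in\cdot)\to\pi$ forces $X^n$ to hit $C$ at an almost surely finite time $\tau$, after which the standard ergodic theorem on the irreducible recurrent class $C$ applies; the contribution $\int_0^\tau \phi_{n+1}(X^n(s))\,ds$ is bounded by $\|\phi_{n+1}\|\tau=o(t)$, so it does not affect the Ces\`aro limit. A secondary technicality is verifying the thinning construction of $X_{n+1}$ and its marginal law, but this is a routine Poisson embedding argument and the resulting pathwise domination by $A-\tilde D$ is immediate.
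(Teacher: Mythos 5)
Your proof is correct and follows essentially the same route as the paper: the paper introduces the free process $X_{n+1}^f(t) = X_{n+1}^f(0) + N_b(\lambda_{n+1}t) - N_d\bigl(\int_0^t\phi_{n+1}(X^n(s))\,ds\bigr)$, applies the ergodic theorem for $X^n$ together with a law of large numbers for the time-changed Poisson process to show $t^{-1}X_{n+1}^f(t)\to\lambda_{n+1}-\bar\phi>0$, and then bounds $X_{n+1}\ge X_{n+1}^f$ via the Skorohod reflection map --- which is exactly your $X_{n+1}(0)+A(t)-\tilde D(t)$ lower bound obtained by thinning. Your explicit handling of arbitrary initial states (entering the absorbing class $C$ in finite time before invoking the ergodic theorem) is a point the paper leaves implicit, and is a welcome addition rather than a deviation.
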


\begin{proof}
Let us define the free process $(X^n,X_{n+1}^f)$ as the Markov process with values in $\Z_+^n
\times \Z$, so that $X^n = (X_1,\dots,X_n)$ is as before, and conditional on~$X^n$, $X_{n+1}^f$ is
a birth and death process on~$\Z$ with constant birth rate $\lambda_{n+1}$ and time-varying
state-dependent death rates $\phi_{n+1}(X^n(t))$. The process $X_{n+1}^f$ can be represented as
\begin{equation}
  \label{eq:FreeProcess}
  X_{n+1}^f(t) = X_{n+1}^f(0) + N_b(\lambda_{n+1}t) - N_d\left(\int_0^t \phi_{n+1}(X^n(s)) \, ds\right),
\end{equation}
where $N_b$ and $N_d$ are independent unit-rate Poisson processes, independent of $X^n$. The
ergodic theorem for positive recurrent Markov processes \cite[Theorem 20.21]{kallenberg2002}
implies that
\[
 t^{-1} \int_0^t \phi_{n+1}(X^n(s)) \, ds \to \sum_{x^n\in \Z_+^n} \phi_{n+1}(x^n) \, \pi(x^n)
 \quad\text{a.s.}
\]
Therefore, dividing both sides of~\reff{eq:FreeProcess} by~$t$ and taking $t \to \infty$, we see
with the help of Lemma~\ref{the:LLN} (Appendix~\ref{sec:LLN}) that
\[
 \lim_{t \to \infty} t^{-1} X_{n+1}^f(t) = \lambda_{n+1} - \sum_{x^n\in \Z_+^n} \phi_{n+1}(x^n) \, \pi(x^n)
 \quad\text{a.s.}
\]
Because the limit is strictly positive, it follows that $X_{n+1}^f(t) \to \infty$ almost surely.

Finally, observe that whenever $(X^n,X_{n+1})$ and $(X^n,X_{n+1}^f)$ are started in the same
initial state, $X_{n+1}$ can be represented in terms of $X_{n+1}^f$ via the Skorohod map
\cite[Theorem D.1]{robert2003}
\[
X_{n+1}(t) = X_{n+1}^f(t) + \sup_{s \le t} [-X_{n+1}^f(s)]^+.
\]
Because $X_{n+1}^f(t) \to \infty$ almost surely, the same is true for $X_{n+1}$.
\end{proof}

\section{Stability results for queueing systems}
\label{sec:queues}

\subsection{General service allocations}
\label{sec:generalAllocations}

Let us return to the queueing system described in Section~\ref{sec:notation}, so from now on
$X=(X_1,\dots,X_N)$ describes the queue lengths of the system, and $\phi_i(x)$ is the service rate
for queue~$i$. The following result gives stability conditions valid for an arbitrary bounded
service allocation $\phi=(\phi_1,\dots,\phi_N)$. Although these conditions are not sharp in
general, they may provide useful inner and outer bounds for stability regions of complex systems
that are not easy to analyze exactly.

\begin{theorem}
\label{the:stable}
Let $X=(X_1,\dots,X_N)$ be the queue length process of the system with strictly positive arrival
rates~$\lambda_i$ and bounded service rates $\phi_i(x)$. Then, regardless of the initial state,
the process~$X_i$ is stable if
\begin{equation}
\label{eq:stableCond}
\lambda_i < \liminf_{x_i \to \infty} \, \inf_{x_j: j\neq i} \phi_i(x),
\end{equation}
and transient, if
\begin{equation}
\label{eq:unstableCond}
\lambda_i > \limsup_{x_i \to \infty} \, \sup_{x_j: j\neq i} \phi_i(x).
\end{equation}
In particular, $X$ is stable if~\reff{eq:stableCond} holds for all~$i$.
\end{theorem}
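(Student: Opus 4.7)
The plan is to dominate each coordinate $X_i$ above and below by a one-dimensional birth and death process built from $\phi_i$ via Lemma~\ref{the:comparison}, and then to invoke the classical one-dimensional stability and transience criteria for birth and death chains on~$\Z_+$ (these may also be obtained as the $n=0$ specializations of Propositions~\ref{the:marginalDrift} and~\ref{the:marginalDrift2}, whose proofs go through with $\pi$ replaced by a point mass). The final claim that $X$ is stable whenever~\reff{eq:stableCond} holds for every~$i$ then follows immediately from Proposition~\ref{the:stableMarginals}.

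For the stability of $X_i$, fix~$i$ and set
\[
 \underline{\phi}_i(y) = \inf\{\phi_i(x): x \in \Z_+^N,\ x_i = y\}, \qquad y \in \Z_+.
\]
Let $Y_i$ be the one-dimensional birth and death process on~$\Z_+$ with birth rate $\lambda_i$ and state-dependent death rate $\underline{\phi}_i$. After relabeling so that the $i$-th coordinate of $X$ is listed first, Lemma~\ref{the:comparison} applies with $I=N$ and $J=1$: condition~\reff{eq:comparisonUp} is the trivial equality $\lambda_i=\lambda_i$, and~\reff{eq:comparisonDown} reads $\phi_i(x) \ge \underline{\phi}_i(x_i)$, which holds by construction. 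Hence $X_i \lest Y_i$. Condition~\reff{eq:stableCond} now reads $\lambda_i < \liminf_{y \to \infty} \underline{\phi}_i(y)$, so the stationary weights $\prod_{k=1}^y \lambda_i/\underline{\phi}_i(k)$ for $Y_i$ decay geometrically, $Y_i$ is positive recurrent, and Proposition~\ref{the:comparisonStability}(ii) transfers stability to~$X_i$.

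The transience argument is the mirror image. Set
\[
 \overline{\phi}_i(y) = \sup\{\phi_i(x): x \in \Z_+^N,\ x_i = y\}
\]
and let $Z_i$ be the one-dimensional birth and death process on~$\Z_+$ with birth rate $\lambda_i$ and death rate $\overline{\phi}_i$. Applying Lemma~\ref{the:comparison} in the opposite direction, with $I=1$ and $J=N$, gives $Z_i \lest X_i$. Under~\reff{eq:unstableCond} the ratios $\overline{\phi}_i(k)/\lambda_i$ are bounded above by some $q<1$ for all sufficiently large~$k$, so the series $\sum_n \prod_{k=1}^n \overline{\phi}_i(k)/\lambda_i$ converges and the classical transience criterion for birth and death chains on~$\Z_+$ yields that $Z_i$ is transient. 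Proposition~\ref{the:comparisonStability}(i) then transfers transience to~$X_i$. The only real obstacle in this scheme is verifying that the ``single-class $n=0$'' specializations of Propositions~\ref{the:marginalDrift} and~\ref{the:marginalDrift2} behave as expected, which is why the argument is phrased directly in terms of the elementary stationary-weight/transience-series criteria rather than via those propositions.
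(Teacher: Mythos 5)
Your proposal follows the paper's route exactly: bound $X_i$ above by a one-dimensional birth--death process with death rates $\inf_{x_j:j\neq i}\phi_i(x)$ (resp.\ below by one with death rates built from $\sup_{x_j:j\neq i}\phi_i(x)$) via Lemma~\ref{the:comparison}, transfer stability/transience with Proposition~\ref{the:comparisonStability}, and finish with Proposition~\ref{the:stableMarginals}. The one place where you deliberately deviate --- replacing the paper's appeal to Proposition~\ref{the:marginalDrift} and to the classical theory of an \emph{irreducible} birth--death process by the explicit stationary-weight and transience-series formulas --- is exactly where a gap opens. The hypotheses of the theorem allow $\phi_i(x)=0$ on parts of the state space (only boundedness and the limit conditions are assumed), so $\underline{\phi}_i(y)$ may vanish for finitely many $y$ even when \reff{eq:stableCond} holds; then $Y_i$ is not irreducible, the products $\prod_{k=1}^{y}\lambda_i/\underline{\phi}_i(k)$ involve division by zero, and ``positive recurrent'' must be read as ``stable after absorption into the top communicating class.'' The same irreducibility caveat afflicts your transience series for $Z_i$ when $\overline{\phi}_i(k)=0$ for some $k$.

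This is precisely the situation the authors flag in Section~\ref{sec:stability} (``it may happen that some of the modified Markov processes are not irreducible''), and their proof is engineered around it: for the stable bound they invoke the single-queue clause of Proposition~\ref{the:marginalDrift}, whose proof goes through Proposition~\ref{the:stationaryDistribution} and needs no irreducibility; for the transient bound they take death rates $\chi_1(x_1)=\sup_{x_2,\dots,x_N}\phi_1(x)+\epsilon$ with $\epsilon>0$ small enough that $\lambda_1-\epsilon$ still exceeds the right-hand side of \reff{eq:unstableCond} --- the added $\epsilon$ makes $Z_1$ irreducible while preserving $Z_1[x_1]\lest X_1[x]$, so the classical criterion applies verbatim. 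Your argument is correct in substance and is repaired by either of these devices (or by restricting the one-dimensional chains to their absorbing communicating classes before applying the product criteria), but as written the steps ``the stationary weights decay geometrically, $Y_i$ is positive recurrent'' and ``the classical transience criterion yields that $Z_i$ is transient'' do not literally apply.
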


\begin{proof}
Assume that~\reff{eq:stableCond} holds for some~$i$. By relabeling the queues if necessary, we may
assume without loss of generality that $i=1$. Let $Y_1$ be the one-class birth and death process
with constant birth rate~$\lambda_1$ and death rates $\psi_1(x_1) = \inf_{x_2,\dots,x_N}
\phi_1(x)$. Because $\lambda_1 < \liminf_{x_1 \to \infty} \psi_1(x_1)$, it then follows from
Proposition~\ref{the:marginalDrift} that $Y_1$ is stable, regardless of the initial state.
Moreover, because $\phi_1(x) \ge \psi_1(x_1)$ for all~$x$, it follows from
Lemma~\ref{the:comparison} that $X_1[x] \lest Y_1[x_1]$ for any initial state~$x$. Hence, $X_1[x]$
is stable by Proposition~\ref{the:comparisonStability}.

Analogously, if~\reff{eq:unstableCond} holds, then it again suffices to consider $i=1$. In that
case, we let $Z_1$ be the one-class birth and death process with birth rate~$\lambda_1$ and death
rates $\chi_1(x_1) = \sup_{x_2, \dots, x_N} \phi_1(x) + \epsilon$, where $\epsilon>0$ is such that
$\lambda_1 - \epsilon$ is strictly larger than the right-hand side of~\reff{eq:unstableCond}. Then
$Z_1$ is irreducible and $\lambda_1 > \limsup_{x_1 \to \infty} \chi_1(x_1)$, so it follows from
the classical theory of ordinary birth and death processes \cite[Proposition
III.2.1]{asmussen2003} that $Z_1$ is transient. Applying Lemma~\ref{the:comparison} once more, we
see that $Z_1[x_1] \lest X_1[x]$ for any $x \in \Z_+^N$. Hence, $X_1[x]$ is transient by
Proposition~\ref{the:comparisonStability}.

Finally, if we assume that~\reff{eq:stableCond} holds for all~$i$, then all $X_i[x]$ are stable,
regardless of the initial state~$x$, hence $X$ is stable by Proposition~\ref{the:stableMarginals}.
\end{proof}

\subsection{Partially decreasing service allocations}
\label{sec:monotoneAllocations}

For the service allocation~$\phi$, the following notion of monotonicity is fundamental in
comparing multiclass birth and death processes. A function $\phi = (\phi_1,\dots,\phi_N)$ from
$\Z_+^N$ into $\R_+^N$ is said to be \emph{partially decreasing} if for all~$i$,
\begin{equation}
\label{monotone}
\phi_i(x) \ge \phi_i(y) \quad \text{for all $x\le y$ such that $x_i=y_i$}.
\end{equation}
Note that a function $\phi=(\phi_1,\dots,\phi_N)$ is partially decreasing, if each of the
coordinate functions~$\phi_i$ is decreasing. Moreover, any function $\phi = (\phi_1,\dots,\phi_N)$
such that $\phi_i$ only depends on~$x_i$, is partially decreasing. Recall also that a
continuous-time Markov process~$X$ is said to be \emph{monotone}, if the map $x \mapsto \E_x
f(X_t)$ is increasing for all~$t$ and for any bounded increasing function~$f$. Using a result of
Massey \cite[Theorem 5.2]{massey1987}, it can be checked that a multiclass birth and death
process~$X$ with constant birth rates and bounded state-dependent death rates~$\phi_i$ is monotone
if and only if $\phi$ is partially decreasing.

We  define $Y^n$ as the $n$-class birth and death process with birth rates~$\lambda_i$ and death
rates $\ell^n\phi_i$ given by the lower partial limits
\begin{equation}
\label{eq:lowerLimits}
\ell^n\phi_i(x_1,\dots,x_n) = \lim_{r \to \infty} \ \inf_{x_{n+1},\dots,x_N > r}
\phi_i(x_1,\dots,x_N).
\end{equation}
The process~$Y^n$ may intuitively be viewed as a partially saturated version of the queue length
process $X = (X_1,\dots,X_N)$, where queues $1,\dots,n$ are allocated the asymptotically
worst-case service rates as $X_{n+1},\dots,X_N$ tend to infinity. We also define
\begin{equation}
\label{eq:defL}
L^n_i(\lambda_1,\dots,\lambda_n; \phi) = \sum_{x \in \Z_+^n} \ell^n\phi_i(x) \, \pi^n(x),
\end{equation}
if $Y^n$ has a unique stationary distribution~$\pi^n$, and set $L^n_i(\lambda_1,\dots,\lambda_n;
\phi) = 0$ otherwise. Thus, the quantity $L^n_i(\lambda_1,\dots,\lambda_n; \phi)$ can be
interpreted as a worst-case average service rate dedicated to queue~$i$ in a partially saturated
system where the numbers of customers in queues $n+1,\dots,N$ tend to infinity. For notational
convenience, we define $L^n_i(\lambda_1,\dots,\lambda_n;\phi) = \ell^0 \phi_i$ for $n=0$.

\begin{theorem}
\label{the:stableDecr}
Let $X = (X_1,\dots,X_N)$ be the queue length process of the system with strictly positive arrival
rates~$\lambda_i$ and a bounded partially decreasing service allocation $\phi =
(\phi_1,\dots,\phi_N)$. Assume that there exists an~$n$ such that
\begin{equation}
\label{eq:nStability}
\lambda_i < L^{i-1}_i(\lambda_1,\dots,\lambda_{i-1};\phi)
\end{equation}
for all $i=1,\dots,n$. Then the processes $X_1,\dots,X_n$ are stable, regardless of the initial
state.
\end{theorem}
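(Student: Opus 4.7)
The plan is to reduce the claim to the stability of the partially saturated processes $Y^i$, and then to establish stability of each $Y^i$ by induction on $i\le n$, invoking Proposition~\ref{the:marginalDrift} on a carefully tailored auxiliary process. Once $Y^n$ is known to be stable, the comparison $(X_1,\dots,X_n)\lest Y^n$, together with Propositions~\ref{the:comparisonStability} and~\ref{the:stableMarginals}, will deliver the conclusion. This comparison itself follows from Lemma~\ref{the:comparison} applied with $I=N$ and $J=n$: since $\phi$ is partially decreasing, for any $x\in\Z_+^N$ and $y\in\Z_+^n$ with $x_i=y_i$ and $(x_1,\dots,x_n)\le y$, raising the first $n$ coordinates of $x$ to $y$ and then sending the trailing coordinates to infinity only decreases $\phi_i$, yielding $\phi_i(x)\ge\ell^n\phi_i(y)$ in the limit, which is exactly the hypothesis of Lemma~\ref{the:comparison}.

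I would prove stability of $Y^i$ (and thus existence of its stationary distribution $\pi^i$) inductively. The base case $i=1$ is Proposition~\ref{the:marginalDrift} in its one-class form, once one observes that partial monotonicity implies $\liminf_{y_1\to\infty}\ell^1\phi_1(y_1)=\ell^0\phi_1=L^0_1$. For the step from $i-1$ to $i$, assume $Y^{i-1}$ is stable with stationary distribution $\pi^{i-1}$, and introduce an auxiliary $i$-class birth and death process $\tilde Y^i$ whose first $i-1$ death rates are the $y_i$-independent functions $\ell^{i-1}\phi_j(y_1,\dots,y_{i-1})$ and whose $i$-th death rate is $\ell^i\phi_i(y_1,\dots,y_i)$. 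The first $i-1$ coordinates of $\tilde Y^i$ evolve autonomously as $Y^{i-1}$, so Proposition~\ref{the:marginalDrift} with $n$ replaced by $i-1$ certifies stability of $\tilde Y^i$ once one verifies
\[
\lambda_i<\sum_{y^{i-1}\in\Z_+^{i-1}}\liminf_{y_i\to\infty}\ell^i\phi_i(y^{i-1},y_i)\,\pi^{i-1}(y^{i-1})=L^{i-1}_i,
\]
which is the hypothesis. A second application of Lemma~\ref{the:comparison} gives $Y^i\lest\tilde Y^i$: for $j\le i-1$ and comparable states with $x_j=y_j$, the estimates $\ell^i\phi_j(x)\ge\ell^i\phi_j(y_1,\dots,y_{i-1},x_i)\ge\ell^{i-1}\phi_j(y_1,\dots,y_{i-1})$ follow from partial monotonicity of $\ell^i\phi_j$ in its non-$j$ arguments and from $\ell^{i-1}\phi_j$ being the limit of $\ell^i\phi_j$ as $y_i\to\infty$, while for $j=i$ the two processes share the rate $\ell^i\phi_i$. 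Proposition~\ref{the:comparisonStability} then transfers stability from $\tilde Y^i$ to $Y^i$, closing the induction.

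The main obstacle I expect is the technical identity $\liminf_{y_i\to\infty}\ell^i\phi_j(y^{i-1},y_i)=\ell^{i-1}\phi_j(y^{i-1})$, which is what aligns the hypothesis $\lambda_i<L^{i-1}_i$ with the marginal drift condition produced by Proposition~\ref{the:marginalDrift}. Its verification amounts to commuting nested infima over the tail coordinates, justified by the observation that under partial monotonicity the inner infimum $\inf_{y_{i+1},\dots,y_N>r}\phi_j$ coincides with $\ell^i\phi_j(y^{i-1},y_i)$ independently of $r$. Beyond this, the only remaining care is in lining up state-space dimensions when invoking Lemma~\ref{the:comparison}; the rest of the argument is largely mechanical.
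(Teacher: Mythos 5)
Your proposal is correct and follows essentially the same route as the paper: the paper isolates your inductive step as Lemma~\ref{the:stableY}, whose auxiliary process $Z^n$ is exactly your $\tilde Y^i$, and it combines Proposition~\ref{the:marginalDrift}, the nested-infima inequality (Lemma~\ref{the:liminf}, which gives the direction of your "technical identity" that is actually needed), and Lemma~\ref{the:comparison} in the same way before transferring stability to $(X_1,\dots,X_n)$ by the final comparison. The only cosmetic difference is that you assert equality of $\liminf_{y_i\to\infty}\ell^i\phi_j$ with $\ell^{i-1}\phi_j$ (true under partial monotonicity), whereas the paper only proves and uses the inequality $\ell^{i-1}\phi_j\le\liminf_{y_i\to\infty}\ell^i\phi_j$.
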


Before proving Theorem~\ref{the:stableDecr}, we establish some auxiliary results, the latter of
which will also be used in Section~\ref{sec:sharp}.

\begin{lemma}
\label{the:liminf}
Let $1 \le n \le N$. Then for all~$i$ and all~$x$,
\[
\ell^{n-1} \phi_i(x_1,\dots,x_{n-1}) \le \liminf_{x_n \to \infty} \, \ell^n\phi_i(x_1,\dots,x_n).
\]
\end{lemma}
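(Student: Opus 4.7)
The plan is to unwind the two layers of limits using the monotonicity of the inner infima. Set
\[
a_r \;=\; \inf_{x_n,\dots,x_N > r} \phi_i(x_1,\dots,x_N),
\qquad
b_r(x_n) \;=\; \inf_{x_{n+1},\dots,x_N > r} \phi_i(x_1,\dots,x_N).
\]
Since both infima are taken over sets of tuples that shrink as $r$ grows, the families $\{a_r\}_r$ and $\{b_r(x_n)\}_r$ are nondecreasing in $r$. Hence the limits in the definition of $\ell^{n-1}\phi_i$ and $\ell^n\phi_i$ coincide with suprema:
\[
\ell^{n-1}\phi_i(x_1,\dots,x_{n-1}) = \sup_r a_r,
\qquad
\ell^n\phi_i(x_1,\dots,x_n) = \sup_r b_r(x_n).
\]

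Next, I would exploit the trivial comparison $a_r \le b_r(x_n)$ whenever $x_n > r$, which holds because the infimum in $a_r$ is taken over a strictly larger set. Then, for each fixed $r$,
\[
a_r \;\le\; \liminf_{x_n \to \infty} b_r(x_n) \;\le\; \liminf_{x_n \to \infty} \sup_{r'} b_{r'}(x_n) \;=\; \liminf_{x_n \to \infty} \ell^n\phi_i(x_1,\dots,x_n).
\]
The right-hand side does not depend on $r$, so taking the supremum over $r$ on the left yields
\[
\ell^{n-1}\phi_i(x_1,\dots,x_{n-1}) \;=\; \sup_r a_r \;\le\; \liminf_{x_n\to\infty} \ell^n \phi_i(x_1,\dots,x_n),
\]
which is exactly the claim.

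There is not really a main obstacle: the statement is a purely order-theoretic fact about iterated lower limits, and no use is made either of the partially decreasing property of $\phi$ or of any queueing structure. The only thing one has to be slightly careful about is not confusing the direction of the monotonicity (the infima increase in $r$, hence their $r \to \infty$ limits are suprema), and ensuring that the inequality $a_r \le b_r(x_n)$ is invoked only for $x_n > r$, which is harmless because the eventual $\liminf_{x_n \to \infty}$ absorbs this restriction.
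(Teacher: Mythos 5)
Your proof is correct and is essentially the paper's argument in supremum form: the paper fixes $\epsilon>0$, picks $r$ with $a_r \ge \ell^{n-1}\phi_i - \epsilon$, and uses the same set-inclusion comparison $a_r \le b_s(x_n)$ for $x_n \ge r$ and $s \ge r$ before letting $s \to \infty$ and $\epsilon \to 0$. Replacing the monotone limits by $\sup_r$ as you do is a harmless repackaging of the same idea, and all your steps (monotonicity in $r$, the inclusion $a_r \le b_r(x_n)$ for $x_n > r$, and absorbing that restriction into the $\liminf$) check out.
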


\begin{proof}
Fix an $x \in \Z_+^n$ and denote $\alpha = \ell^{n-1} \phi_i(x_1,\dots,x_{n-1})$. Then for any
$\epsilon>0$ there exists an~$r$ such that
\[
\alpha - \epsilon \le \inf_{y_n,\dots y_N \ge r} \phi_i(x_1,\dots,x_{n-1},y_n,\dots,y_N).
\]
Hence, it follows that for all $y_n \ge r$ and for all $s \ge r$,
\begin{align*}
\alpha - \epsilon
&\le \inf_{y_{n+1}, \dots y_N \ge r} \phi_i(x_1,\dots,x_{n-1},y_n,\dots,y_N) \\
&\le \inf_{y_{n+1}, \dots y_N \ge s} \phi_i(x_1,\dots,x_{n-1},y_n,\dots,y_N).
\end{align*}
By taking $s \to \infty$, it follows that $\alpha - \epsilon \le \ell^n
\phi_i(x_1,\dots,x_{n-1},y_n)$ for all $y_n \ge r$, so the claim follows because $\epsilon$ is
arbitrary.

\end{proof}

\begin{lemma}
\label{the:stableY}
Assume that the function $\phi = (\phi_1,\dots,\phi_N)$ is bounded and partially decreasing, and
assume that $\lambda_i$ are strictly positive and satisfy inequalities~\reff{eq:nStability} for
$i=1,\dots,n$. Then the $n$-class birth and death process $Y^n = (Y^n_1,\dots,Y^n_n)$ with birth
rates~$\lambda_i$ and death rates~$\ell^n\phi_i$ is stable.
\end{lemma}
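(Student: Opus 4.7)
The plan is to argue by induction on $n$. For the base case $n=1$, the process $Y^1$ is a one-class birth and death process with birth rate $\lambda_1$ and death rate $\ell^1\phi_1(x_1)$. Lemma~\ref{the:liminf} applied with $n=1$ gives $\ell^0\phi_1 \le \liminf_{x_1 \to \infty} \ell^1\phi_1(x_1)$, so the hypothesis $\lambda_1 < L^0_1(\phi) = \ell^0\phi_1$ together with the ``in particular'' part of Proposition~\ref{the:marginalDrift} (taken with its $n$ equal to~$0$) yields stability of $Y^1$.

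For the inductive step, suppose the lemma has been established up to $n-1$. The first $n-1$ of the inequalities~\reff{eq:nStability} then imply by induction that $Y^{n-1}$ is stable; denote its stationary distribution by~$\pi^{n-1}$. I would introduce the auxiliary $n$-class birth and death process $\tilde Y^n$ with the same birth rates $\lambda_i$ and with death rates $\psi_i(y) = \ell^{n-1}\phi_i(y_1,\dots,y_{n-1})$ for $i \le n-1$ and $\psi_n(y) = \ell^n\phi_n(y)$. The point of this construction is that the death rates of the first $n-1$ coordinates of $\tilde Y^n$ depend only on $(y_1,\dots,y_{n-1})$, so $(\tilde Y^n_1,\dots,\tilde Y^n_{n-1})$ is Markov on its own and coincides in law with $Y^{n-1}$.

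The key step is the stochastic comparison $Y^n \lest \tilde Y^n$ via Lemma~\ref{the:comparison}. To verify the required death-rate inequality I will use two facts: first, $\ell^n\phi_i(x) \ge \ell^{n-1}\phi_i(x_1,\dots,x_{n-1})$ pointwise, directly from the definition of the lower partial limits; and second, partial monotonicity of $\phi$ is inherited by $\ell^{n-1}\phi_i$ and by $\ell^n\phi_n$. Combining these gives $\ell^n\phi_i(x) \ge \ell^{n-1}\phi_i(y_1,\dots,y_{n-1})$ for all $i \le n-1$ whenever $x_i = y_i$ and $x \le y$, while for $i=n$ the comparison $\ell^n\phi_n(x) \ge \ell^n\phi_n(y)$ is partial monotonicity itself. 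Lemma~\ref{the:comparison} then delivers $Y^n \lest \tilde Y^n$.

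Finally, I apply Proposition~\ref{the:marginalDrift} to $\tilde Y^n$. Its first $n-1$ coordinates form the stable process $Y^{n-1}$ with stationary distribution~$\pi^{n-1}$, and Lemma~\ref{the:liminf} provides
\[
\sum_{x^{n-1} \in \Z_+^{n-1}}\Bigl\{\liminf_{x_n \to \infty}\ell^n\phi_n(x^{n-1},x_n)\Bigr\}\pi^{n-1}(x^{n-1})
\ge L^{n-1}_n(\lambda_1,\dots,\lambda_{n-1};\phi) > \lambda_n,
\]
where the strict inequality is the $i=n$ case of~\reff{eq:nStability}. Proposition~\ref{the:marginalDrift} therefore certifies stability of $\tilde Y^n$, and Proposition~\ref{the:comparisonStability} transfers it to $Y^n$. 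The main obstacle I anticipate is the bookkeeping in the comparison $Y^n \lest \tilde Y^n$, particularly in chaining the pointwise bound $\ell^{n-1}\phi_i \le \ell^n\phi_i$ with partial monotonicity to obtain the death-rate hypothesis of Lemma~\ref{the:comparison}; once this is in place, the drift computation is routine.
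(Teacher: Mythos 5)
Your argument is correct and follows the paper's proof essentially verbatim: the same base case via Proposition~\ref{the:marginalDrift} and Lemma~\ref{the:liminf}, and in the inductive step the same auxiliary process (your $\tilde Y^n$ is the paper's $Z^n$), the same marginal drift computation, and the same comparison via Lemma~\ref{the:comparison}. One small correction: the pointwise bound $\ell^{n-1}\phi_i(x_1,\dots,x_{n-1}) \le \ell^n\phi_i(x_1,\dots,x_n)$ is not ``direct from the definition'' of the lower partial limits (a $\phi_i$ that dips only at one fixed finite value of $x_n$ violates it) --- it is precisely where partial decreasingness enters, via $\ell^{n-1}\phi_i(x_1,\dots,x_{n-1}) \le \phi_i(x)$ for all $x$, which is how the paper justifies it.
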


\begin{proof}
To prove the claim for $n=1$, let us assume that $\lambda_1 < \ell^0 \phi_1$. Then $Y^1$ is a
one-class birth and death process with birth rate~$\lambda_1$ and state-dependent death rates
$\ell^1 \phi_1(x_1)$. The stability of~$Y_1$ follows from Proposition~\ref{the:marginalDrift},
because by Lemma~\ref{the:liminf},
\[
\lambda_1 < \ell^0 \phi_1 \le \liminf_{x_1\to\infty} \ell^1 \phi_1(x_1).
\]

To proceed by induction, suppose that the claim is true for $n-1$, and assume that the
inequalities~\reff{eq:nStability} hold for $i = 1,\dots,n$. Then $Y^{n-1}$ is stable by the
induction assumption, and Proposition~\ref{the:stationaryDistribution} shows that $Y^{n-1}$ has a
unique stationary distribution $\pi^{n-1}$. Let $Z^n = (Z_1^n,\dots,Z_n^n)$ be the $n$-class birth
and death process with birth rates~$\lambda_i$ and death rates
\[
\psi_i(x_1,\dots,x_n) = \left\{
\begin{aligned}
\ell^{n-1}\phi_i(x_1, \dots, x_{n-1}), \quad i&<n, \\
\ell^{n}\phi_i(x_1,\dots,x_n),         \quad i&=n.
\end{aligned}
\right.
\]
This choice of rates implies that the marginal process $(Z_1^n,\dots,Z_{n-1}^n)$ is Markov and
coincides with $Y^{n-1}$. Now observe that the condition $\lambda_n <
L^{n-1}_n(\lambda_1,\dots,\lambda_{n-1};\phi)$ is equivalent to
\[
\lambda_n < \sum_{x\in\Z_+^{n-1}} \ell^{n-1} \phi_n(x_1,\dots,x_{n-1}) \,
\pi^{n-1}(x_1,\dots,x_{n-1}),
\]
so using Lemma~\ref{the:liminf}, we see that
\[
\lambda_n < \sum_{x \in \Z_+^{n-1}} \left\{\liminf_{x_n \to \infty} \psi_n(x_1,\dots,x_{n-1},x_n)
\right\} \pi^{n-1}(x_1,\dots,x_{n-1}).
\]
Hence, Proposition~\ref{the:marginalDrift} shows that $Z^n$ is stable.

Because $\phi$ is partially decreasing, we have $\ell^{n-1} \phi_i(x_1,\dots,x_{n-1}) \le
\phi_i(x)$ for all $i<n$, and thus $\ell^{n-1} \phi_i(x_1,\dots,x_{n-1}) \le
\ell^n\phi_i(x_1,\dots,x_n)$ for all $i<n$. In particular, it follows that $\psi_i(x) \le
\ell^n\phi_i(x)$ for all~$x$ and all $i\le n$. Further, because $\ell^n\phi$ is also partially
decreasing, it follows that $\ell^n\phi_i(x) \ge \psi_i(y)$ for all $x$ and $y$ in $\Z_+^n$ such
that $x\le y$ and $x_i=y_i$. Thus, Lemma~\ref{the:comparison} implies that $Y^n \lest Z^n$,
whenever $Y^n$ and $Z^n$ are started in the same initial state. Hence, $Y^n$ is stable by
Proposition~\ref{the:comparisonStability}.
\end{proof}

\begin{proof}[Proof of Theorem~\ref{the:stableDecr}]
Assuming the inequalities~\reff{eq:nStability} are valid for $i = 1,\dots, n$, we see using
Lemma~\ref{the:stableY} that $Y^n$ is stable. Moreover, $\phi_i(x_1,\dots,x_N) \ge \ell^n
\phi_i(x_1,\dots, x_n)$ for all $i \le n$ and all $x \in \Z_+^N$, because $\phi$ is partially
decreasing. Because $\ell^n \phi$ is also partially decreasing, it follows that $\phi_i(x) \ge
\ell^n \phi_i(y)$ for all $x \in \Z_+^N$ and $y \in \Z_+^n$ such that $(x_1,\dots,x_n) \le
(y_1,\dots,y_n)$ and $x_i=y_i$. Hence, Lemma~\ref{the:comparison} shows that
$(X_1[x],\dots,X_n[x]) \lest (Y_1^n[y],\dots,Y^n_n[y])$ for all initial states $x \in \Z_+^N$ and
$y \in \Z_+^n$ such that $(x_1,\dots,x_n) \le (y_1,\dots,y_n)$.
Proposition~\ref{the:comparisonStability} thus implies that $(X_1[x],\dots,X_n[x])$ is stable.
\end{proof}

\subsection{Partially decreasing service allocations with uniform limits}
\label{sec:uniformAllocations}

In the following we restrict ourselves to queueing systems where the service allocation $\phi =
(\phi_1,\dots,\phi_N)$ is such that each coordinate function~$\phi_i$ has a uniform limit as some
of the input variables tend to infinity. More precisely, we say that a function $f: \Z_+^N \to \R$
has \emph{uniform limits at infinity}, if the following conditions hold:
\begin{enumerate}[(i)]
\item There exists a constant~$f^0$ such that
\[
 \sup_{x \in \Z_+^N: x_1,\dots,x_N > r} | f(x) - f^0 |  \to 0,
\quad \text{as $r \to \infty$}.
\]
\item For any $n = 1,\dots,N-1$ and any permutation~$\sigma$ on
$\{1,\dots,N\}$, there exists a function $f^{n,\sigma}: \Z_+^n \to \R$ such that as $r \to
\infty$,
\[
\sup_{x \in \Z_+^N: x_{\sigma(n+1)},\dots,x_{\sigma(N)} > r} | f(x) -
f^{n,\sigma}(x_{\sigma(1)},\dots,x_{\sigma(n)}) | \to 0.
\]
\end{enumerate}

We will next show that the class of functions having uniform limits at infinity is rich enough to
be of interest. For example, assume that the allocation function~$\phi$ is of the form
\[
\phi_i(x) = g_i(x_i) h(x)
\]
where $g_i$ has a limit at infinity and $h$ is decreasing. Then $g_i$ obviously has uniform limits
at infinity, and hence the same applies for~$\phi_i$ using the result below.

\begin{proposition}
\label{the:uniformLimits}
Let $f$ and $g$ be bounded functions on $\Z_+^N$.
\begin{enumerate}[(i)]
\item If $f$ is positive and decreasing, then it has uniform
limits at infinity.
\item If $f$ and $g$ have uniform limits at infinity,
then so do the functions $f+g$ and $fg$.
\end{enumerate}
\end{proposition}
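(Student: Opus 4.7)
My plan is to handle~(i) by extending~$f$ to a continuous function on a compactification of the state space and then invoking uniform continuity, and to handle~(ii) by direct triangle-inequality estimates.

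For~(i), let $\Z_+^* := \Z_+ \cup \{\infty\}$ be the one-point compactification of~$\Z_+$, which is compact and metrizable under a metric $d_0$ satisfying $d_0(k,\infty) \to 0$ as $k \to \infty$ (for instance $d_0(k,\infty) = 1/(k+1)$, with $d_0$ the Euclidean metric on $\{0\} \cup \{1/(k+1) : k \in \Z_+\}$). Then $(\Z_+^*)^N$ with the product topology is a compact metric space. Define an extension $\tilde f: (\Z_+^*)^N \to \R_+$ by
\[
\tilde f(x) := \lim_{r \to \infty} f(x_1 \wedge r, \dots, x_N \wedge r).
\]
The limit exists because the sequence is decreasing in~$r$ (by monotonicity of~$f$) and bounded below by~$0$. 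The extension $\tilde f$ agrees with~$f$ on~$\Z_+^N$, is decreasing on $(\Z_+^*)^N$, and realizes the candidate limit functions at the boundary: $\tilde f(\infty,\dots,\infty) = f^0$ and $\tilde f(x_1,\dots,x_n,\infty,\dots,\infty) = f^{n,\mathrm{id}}(x_1,\dots,x_n)$, with obvious analogues for general permutations~$\sigma$.

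The crux is to verify that $\tilde f$ is continuous. Given $x^{(k)} \to x$ in $(\Z_+^*)^N$, split indices into $A = \{i : x_i < \infty\}$ and $B = \{i : x_i = \infty\}$; for~$k$ large, $x^{(k)}_i = x_i$ for $i \in A$, while $x^{(k)}_i \to \infty$ for $i \in B$. Choose $r_k \to \infty$ so that $x^{(k)}_i \geq r_k$ for every $i \in B$, and set $z^{(k)} := (x_A, r_k \mathbf{1}_B) \in \Z_+^N$. Monotonicity of~$\tilde f$ gives $\tilde f(z^{(k)}) \geq \tilde f(x^{(k)}) \geq \tilde f(x)$, while $\tilde f(z^{(k)}) = f(z^{(k)}) \to \tilde f(x)$ by the very definition of $\tilde f(x)$; the squeeze yields $\tilde f(x^{(k)}) \to \tilde f(x)$. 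Compactness of $(\Z_+^*)^N$ upgrades continuity to uniform continuity, which translates directly to the uniform-limits-at-infinity property: given $\epsilon > 0$, let $\delta$ be the modulus of uniform continuity; for any $x \in \Z_+^N$ with $x_{\sigma(i)} > r$ for $i > n$ and $r$ large enough that $(N-n)/(r+1) < \delta$, the point $\bar x$ obtained from~$x$ by replacing the coordinates $\sigma(n+1), \dots, \sigma(N)$ with~$\infty$ satisfies $d(x, \bar x) < \delta$, and hence $|f(x) - f^{n,\sigma}(x_{\sigma(1)}, \dots, x_{\sigma(n)})| = |\tilde f(x) - \tilde f(\bar x)| < \epsilon$.

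For~(ii), set $(f+g)^{n,\sigma} := f^{n,\sigma} + g^{n,\sigma}$ and $(f+g)^0 := f^0 + g^0$, and use the triangle inequality. For the product, set $(fg)^{n,\sigma} := f^{n,\sigma} g^{n,\sigma}$ and $(fg)^0 := f^0 g^0$, then use the splitting
\[
fg - f^{n,\sigma} g^{n,\sigma} = (f - f^{n,\sigma})\,g + f^{n,\sigma}\,(g - g^{n,\sigma}),
\]
making each term uniformly small using boundedness of~$g$ and of~$f^{n,\sigma}$ (the latter following from $|f^{n,\sigma}| \leq ||f||$ by pointwise convergence of~$f$ to~$f^{n,\sigma}$). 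The main obstacle is the continuity of~$\tilde f$ at boundary points with mixed finite and infinite coordinates; the monotone truncation~$z^{(k)}$ resolves this by providing an explicit upper bound on~$\tilde f(x^{(k)})$ that can be evaluated within~$\Z_+^N$ and driven to~$\tilde f(x)$.
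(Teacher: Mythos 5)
Your proof is correct, and part~(i) takes a genuinely different route from the paper. The paper proves~(i) by induction on the dimension~$N$: it first handles the constant limit~$f^0$ directly from monotonicity, then, in the inductive step, fixes each coordinate $x_i = y_i \le r_0$ in turn and applies the induction hypothesis to the resulting decreasing function of $N-1$ variables, finally taking a maximum of the finitely many thresholds so obtained. You instead extend~$f$ to the compact space $(\Z_+^*)^N$ by the monotone diagonal limit, verify continuity of the extension via the squeeze $\tilde f(z^{(k)}) \ge \tilde f(x^{(k)}) \ge \tilde f(x)$ with the truncation $z^{(k)}$, and then let Heine--Cantor do the work of converting pointwise limits into uniform ones. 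Both arguments hinge on the same fact --- that for a decreasing function the infimum over a block of coordinates is attained along the diagonal --- but your compactification packages the quantifier bookkeeping into a single appeal to uniform continuity, at the price of introducing the auxiliary topological structure; the paper's induction is more elementary and self-contained but requires tracking the thresholds $r_0, r_1(y_1), \dots$ explicitly. Your part~(ii) is essentially identical to the paper's: the same splitting $fg - f^{n,\sigma}g^{n,\sigma} = (f-f^{n,\sigma})g + f^{n,\sigma}(g-g^{n,\sigma})$ bounded by $\|g\|\,|f-f^{n,\sigma}| + \|f\|\,|g-g^{n,\sigma}|$.
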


\begin{proof}
See Appendix~\ref{sec:uniformLimits}.
\end{proof}

If $\phi$ has uniform limits, then the partial lower limits~$\ell^n \phi$ defined
in~\reff{eq:lowerLimits} become true limits in the sense that
\[
\phi(x_1,\dots,x_N) \to \ell^n \phi(x_1,\dots,x_n)
\]
uniformly over $x_1,\dots,x_n$, as $\min(x_{n+1},\dots,x_N) \to \infty$.

\begin{theorem}
\label{the:unstableDecr}
Let $X = (X_1,\dots,X_N)$ be the queue length process of the system with strictly positive arrival
rates~$\lambda_i$ and a partially decreasing service allocation $\phi = (\phi_1,\dots,\phi_N)$
having uniform limits at infinity. Assume that there is an index~$n$ such that
\begin{align}
\label{eq:unstableDecrCond}
\lambda_i &< L^{i-1}_i(\lambda_1,\dots,\lambda_{i-1};\phi)
\quad &\text{for all} \ i \le n, \\
\label{eq:unstableDecrCond2}
\lambda_i &> L^n_i(\lambda_1,\dots,\lambda_n; \phi) \quad &\text{for all} \ i>n.
\end{align}
Then the process $(X_{n+1},\dots,X_N)$ is unstable, regardless of the initial state.
\end{theorem}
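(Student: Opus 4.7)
The plan combines the monotonicity of $X$ (by Massey's theorem, since $\phi$ is partially decreasing), the uniform-limit hypothesis, and a drift-plus-stopping-time argument. By Lemma~\ref{the:stableY}, hypothesis \reff{eq:unstableDecrCond} gives that $Y^n$ is stable with unique stationary distribution $\pi^n$, so $L^n_i = \E_{\pi^n}[\ell^n \phi_i]$. Fix $\epsilon > 0$ with $\sum_{i > n}(\lambda_i - L^n_i) > (N - n + 1)\epsilon$, and by the uniform-limit hypothesis choose $r$ so that $|\phi_j(x) - \ell^n \phi_j(x^n)| < \epsilon$ for $j = 1, \dots, N$ whenever $\min_{k > n} x_k > r$.

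I would start $X$ from $x_K = (0, \dots, 0, K, \dots, K)$ with $K > 2r$, and define the stopping time $\tau_K = \inf\{t : \min_{j > n} X_j(t) \le r\}$. On $[0, \tau_K)$, the first $n$ coordinates of $X$ evolve with death rates within $\epsilon$ of those of $Y^n$. A coupling in the spirit of Lemma~\ref{the:comparison}, driving $X^n$ and a copy of $Y^n$ by common Poisson arrival streams, keeps them close on $[0, \tau_K)$; the ergodic theorem applied to $Y^n$ then yields that $t^{-1} \int_0^{t \wedge \tau_K} \ell^n \phi_j(X^n(s))\, ds$ stays near $L^n_j$ on $\{\tau_K = \infty\}$.

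Consider the Lyapunov function $V(x) = \sum_{j > n} x_j$. Its drift under $X$ is $\Delta V(x) = \sum_{j > n}(\lambda_j - \phi_j(x) 1(x_j > 0))$, and on $[0, \tau_K)$ (where $x_j > r > 0$ for $j > n$) the choice of $r$ gives $\Delta V(X(s)) \ge \sum_{j > n}(\lambda_j - \ell^n \phi_j(X^n(s))) - (N - n)\epsilon$. Applying Kolmogorov's forward equation to truncated versions $V_m$ of $V$ (as in the proof of Proposition~\ref{the:marginalDrift}) produces the martingale $V(X(t \wedge \tau_K)) - V(x_K) - \int_0^{t \wedge \tau_K} \Delta V(X(s))\, ds$ with unit-sized jumps.

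The main obstacle is converting this averaged positive drift on $\{\tau_K > t\}$ into a uniform lower bound $\pr_{x_K}(\tau_K = \infty) \ge \delta > 0$ for $K$ large. I would handle it via martingale concentration: on $[0, \tau_K)$ the compensator of $V(X(\cdot))$ grows at asymptotic rate $\sum_{j > n}(\lambda_j - L^n_j) - O(\epsilon) > 0$, while the martingale part has $O(\sqrt{t})$ fluctuations, so $V(X(t)) \to \infty$ on $\{\tau_K = \infty\}$ with probability bounded below uniformly in $K$. Hence $V(X(\cdot))$ is not tight under $\pr_{x_K}$ and $(X_{n+1}, \dots, X_N)$ is unstable from initial state $x_K$; since stability of $X$ does not depend on the initial state by Proposition~\ref{the:stationaryDistribution}, and the first $n$ queues are stable for every initial state by Theorem~\ref{the:stableDecr}, Proposition~\ref{the:stableMarginals} extends the instability of $(X_{n+1}, \dots, X_N)$ to all initial states.
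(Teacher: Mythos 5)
Your overall skeleton is the right one and matches the paper's: localize to the region $B_r=\{x: x_{n+1},\dots,x_N> r\}$ where the uniform-limit hypothesis makes $\phi_j$ within $\epsilon$ of $\ell^n\phi_j$, exploit the positive mean drift of the queues $j>n$ there, show the process stays in $B_r$ forever with positive probability, and then upgrade via Proposition~\ref{the:stationaryDistribution} and Proposition~\ref{the:stableMarginals}. But the central step --- converting the \emph{averaged} drift condition $\lambda_j>L^n_j$ into $\pr_{x_K}(\tau_K=\infty)>0$ --- has a genuine gap. Your argument needs a law of large numbers for $t^{-1}\int_0^{t}\ell^n\phi_j(X^n(s))\,ds$, but $X^n$ is not an autonomous Markov process, so the ergodic theorem applies only to $Y^n$, and the proposed transfer (``a common-Poisson-stream coupling keeps $X^n$ and $Y^n$ close'') does not hold: such couplings yield stochastic domination, not closeness, and the domination that is freely available ($\phi_i\ge\ell^n\phi_i$ gives $X^n\lest Y^n$) bounds $\E\,\ell^n\phi_j(X^n(s))$ from \emph{below} by $L^n_j$ (since $\ell^n\phi_j$ is decreasing for $j>n$), i.e.\ from the wrong side for a positive drift of $V$. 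Conditioning on the future-measurable event $\{\tau_K=\infty\}$ further breaks any coupling. Finally, the ``compensator grows linearly while the martingale is $O(\sqrt t)$'' concentration step cannot be run pointwise, because $\Delta V(x)\ge\sum_{j>n}(\lambda_j-\ell^n\phi_j(x^n))-(N-n)\epsilon$ is not bounded below by a positive constant on $B_r$ (e.g.\ at $x^n=0$ the rates $\ell^n\phi_j(0)$ may well exceed $\lambda_j$); positivity holds only on average against $\pi^n$.

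The paper closes exactly this gap by introducing an auxiliary $N$-class process $Z$ with death rates $\psi_i(x)=\ell^n\phi_i(x_1,\dots,x_n)+\epsilon$ for \emph{all} $i$. Its first $n$ coordinates form the autonomous ergodic chain $Y^{n,\epsilon}$, so Proposition~\ref{the:marginalDrift2} (free process, LLN for the time-changed Poisson process, Skorohod reflection) rigorously yields transience of $Z_j$ for $j>n$; this requires Lemma~\ref{the:continuity} to check that the drift condition~\reff{eq:unstableDecrCond2} survives the perturbation $\pi^n\leadsto\pi^{n,\epsilon}$, a step you omit. The uniform-limit hypothesis then gives $\psi_i\ge\phi_i$ on $B_{r_0}$, and a coupling localized to $B_r$ gives the hitting-time comparison $\tau_{A_r}(Z[x])\lest\tau_{A_r}(X[x])$, hence $\pr_x(\tau_{A_r}(X)=\infty)>0$; the contradiction with stability is obtained through Lemma~\ref{the:increasingSets} and irreducibility on the recurrent class from Proposition~\ref{the:stationaryDistribution}. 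To repair your proof you would need to replace the ``closeness'' coupling by this kind of comparison with a process whose modulating component is autonomous and genuinely ergodic.
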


\begin{proof}
Given $\epsilon \ge 0$, let $Y^{n,\epsilon}$ be the $n$-class birth and death process with birth
rates~$\lambda_i$ and death rates $\ell^n\phi_i(x_1,\dots,x_n) + \epsilon$, and let $Y^n =
Y^{n,0}$. Then $Y^n$ is stable by Lemma~\ref{the:stableY}. Further, because $\ell^n\phi$ is
partially decreasing, it follows by Lemma~\ref{the:comparison} that $Y^{n,\epsilon} \lest Y^n$,
whenever $Y^{n,\epsilon}$ and $Y^n$ are started in the same state. Hence,
Lemma~\ref{the:comparisonStability} implies that $Y^{n,\epsilon}$ is stable for all $\epsilon>0$,
and moreover $\pi^{n,\epsilon} \lest \pi^n$, where $\pi^{n,\epsilon}$ and $\pi^n$ denote the
stationary distributions of $Y^{n, \epsilon}$ and~$Y^n$, respectively. In particular, the family
$\{\pi^{n, \epsilon}\}_{\epsilon > 0}$ is tight, and so by Lemma~\ref{the:continuity}
(Appendix~\ref{sec:perturbation}), $\pi^{n,\epsilon} \to \pi^n$ weakly, as $\epsilon \to 0$.
Because $\phi$ is bounded, it follows that for all~$i$,
\[
\lim_{\epsilon \to 0} \sum_{x \in \Z_+^n} \ell^n\phi_i(x) \, \pi^{n,\epsilon}(x) =
L^n_i(\lambda_1,\dots,\lambda_n; \phi).
 \]
Hence, by~\reff{eq:unstableDecrCond2}, we can choose an $\epsilon>0$ such that
\begin{equation}
\label{eq:notcondn}
\lambda_i > \sum_{x\in\Z_+^n} \ell^n\phi_i(x) \, \pi^{n,\epsilon}(x) \quad \text{for all $i>n$}.
\end{equation}

Let $Z$ be the $N$-class birth and death process with birth rates~$\lambda_i$ and death rates
$\psi_i(x_1,\dots,x_N) = \ell^n \phi_i(x_1,\dots,x_n) + \epsilon$. Then $(Z_1,\dots,Z_{n})$ is
Markov and coincides with $Y^{n,\epsilon}$ as defined above. Moreover,
inequality~\reff{eq:notcondn} implies that for all $i > n$, the mean drift of~$Z_i$ is strictly
positive, so Proposition~\ref{the:marginalDrift2} implies that $Z_i$ is transient for each $i>n$.

Next, observe that because $\phi_i$ has uniform limits, it follows that there exists an~$r_0$ such
that $\psi_i(x) \ge \phi_i(x)$ for all $i = 1,\dots,N$ and for all~$x$ such that
$x_{n+1},\dots,x_N \ge r_0$. Let us choose an $r \ge r_0$ and define $A_r$ to be the complement of
the set
\[
B_r = \{x \in \Z_+^N: x_{n+1},\dots,x_N \ge r\},
\]
Because each of the processes $Z_{n+1},\dots,Z_N$ is transient, it follows that we can choose an
$x \in B_r$ such that
\begin{equation}
\label{eq:transientZ}
\pr_x(\tau_{A_r}(Z) = \infty) > 0,
\end{equation}
where $\tau_{A_r}(Z) = \inf\{t>0: Z(t)\in A_r\}$ denotes the hitting time of~$Z$ into~$A_r$.
Because $\phi$ is partially decreasing, it follows that $\psi_i(x) \ge \phi_i(y)$ for all $x$ and
$y$ in $B_r$ such that $x\le y$ and $x_i = y_i$. Using a similar coupling construction as in
Lemma~\ref{the:comparison}, it is then straightforward to verify that $\tau_{A_r}(Z[x]) \lest
\tau_{A_r}(X[x])$ for all $x \in B_r$. Hence using~\reff{eq:transientZ} we may conclude that there
exists an $x\in B_r$ such that
\begin{equation}
\label{eq:noHit}
\pr_x(\tau_{A_r}(X) = \infty) > 0.
\end{equation}

We now assume that $X$ is stable and derive a contradiction. Using
Proposition~\ref{the:stationaryDistribution}, we know that there exists a set~$C$ such that $X[x]$
is irreducible and positive recurrent for all $x \in C$. Let us now choose an $r \ge r_0$ such
that $C \cap A_r$ is nonempty. Using Lemma~\ref{the:increasingSets} and standard properties of
irreducible Markov processes \cite[Proposition 8.13]{kallenberg2002}, we then know that the
hitting time of $X[x]$ into $C \cap A_r$ is finite almost surely for all~$x$, which
contradicts~\reff{eq:noHit}. Hence, $X$ must be unstable. In particular, because $(X_1,\dots,X_n)$
is stable, it follows from Proposition~\ref{the:stableMarginals} that $(X_{n+1},\dots,X_N)$ is
unstable.
\end{proof}

\label{sec:sharp}
We are now ready to present our main theorem. For any permutation~$\sigma$ on $\{1,\dots,N\}$, we
define $\lambda^\sigma_i = \lambda_{\sigma(i)}$ and $\phi^\sigma_i(x) =
\phi_{\sigma(i)}(x_{\sigma^{-1}(1)},\dots,x_{\sigma^{-1}(N)})$. The vector~$\lambda^\sigma$ and
the function~$\phi^\sigma$ will then correspond to the system where queues are relabeled according
to~$\sigma$. Denote
\[
S_N(\phi) = \left\{\lambda\in (0,\infty)^N: \lambda_i < L_i^{i-1}(\lambda_1,\dots,\lambda_{i-1};
\phi) \ \forall i=1,\dots,N\right\},
\]
where $L_i^{i-1}(\lambda_1,\dots,\lambda_{i-1}; \phi)$ are given by~\reff{eq:defL}, and define
\begin{equation}
\label{eq:defS}
\cS(\phi) = \bigcup_\sigma \left\{\lambda \in (0,\infty)^N: \lambda^\sigma \in
S_N(\phi^\sigma)\right\},
\end{equation}
where the union is taken over all permutations on $\{1,\dots,N\}$.

\begin{theorem}
\label{the:boundaryDecr}
Assume $\phi = (\phi_1,\dots,\phi_N)$ is partially decreasing and has uniform limits at infinity.
Then the set $\cS(\phi)$ defined by~\reff{eq:defS} is open, and the queue length process
$X=(X_1,\dots,X_N)$ of the system with arrival rates $\lambda = (\lambda_1,\dots,\lambda_N)$ and
service allocation~$\phi$ is stable for all $\lambda \in \cS(\phi)$, and unstable for
all~$\lambda$ outside the closure of $\cS(\phi)$.
\end{theorem}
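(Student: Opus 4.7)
The plan is to establish three separate claims: that $X$ is stable for $\lambda \in \cS(\phi)$, that $\cS(\phi)$ is open, and that $X$ is unstable for $\lambda \notin \overline{\cS(\phi)}$. The stability claim is immediate: given a permutation $\sigma$ witnessing $\lambda \in \cS(\phi)$, apply Theorem~\ref{the:stableDecr} with $n=N$ to the relabeled system to obtain that each marginal $X^\sigma_i$ is stable, and then invoke Proposition~\ref{the:stableMarginals}. For openness, the key step is to show that each functional $(\lambda_1,\ldots,\lambda_n) \mapsto L^n_i(\lambda_1,\ldots,\lambda_n;\phi)$ is continuous on the open region where $Y^n$ is stable. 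This follows from the boundedness of $\ell^n\phi_i$ combined with weak continuity of the stationary distribution $\pi^n$ in the birth rates, itself a consequence of the stochastic monotonicity of $Y^n$ in $\lambda$ (from Lemma~\ref{the:comparison}) together with a perturbation argument of the type given in Lemma~\ref{the:continuity}. The strict inequalities defining $\cS(\phi)$ then carve out a finite union of open sets.

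The heart of the proof is the instability claim. My strategy is to construct a vector $\mu \le \lambda$ (coordinate-wise) satisfying the strict instability conditions of Theorem~\ref{the:unstableDecr} for some permutation $\sigma^*$ and index $n^* < N$, and then to transfer instability from $\mu$ to $\lambda$ via stochastic comparison. Once such a $\mu$ is built, Lemma~\ref{the:comparison} yields $X[\mu] \lest X[\lambda]$, so instability of the marginal $(X_j[\mu])_{j \notin S^*}$ with $S^* = \{\sigma^*(1),\ldots,\sigma^*(n^*)\}$ (provided by Theorem~\ref{the:unstableDecr}) transfers to the corresponding marginal of $X[\lambda]$ by Proposition~\ref{the:comparisonStability}, and $X[\lambda]$ itself is then unstable by Proposition~\ref{the:stableMarginals}.

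To construct $\mu$, I would iterate a strict-greedy procedure with a fixed step size $\delta>0$. Set $\mu^0 = \lambda$. Given $\mu^k$, find a longest permutation prefix $(\sigma(1),\ldots,\sigma(n))$ such that $\mu^k_{\sigma(i)} < L^{i-1}_i(\mu^k_{\sigma(1)},\ldots,\mu^k_{\sigma(i-1)}; \phi^\sigma)$ strictly for every $i \le n$. At termination, each remaining index $j$ satisfies $\mu^k_j \ge L^n_{n+1}(\mu^k_{\sigma(1)},\ldots,\mu^k_{\sigma(n)}; \phi^{\sigma_j})$, where $\sigma_j$ extends $\sigma$ by placing $j$ in position $n+1$. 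If $n < N$ and all these reverse inequalities are strict, set $\mu = \mu^k$ (the \emph{good case}) and apply Theorem~\ref{the:unstableDecr} to conclude. Otherwise, either $n = N$ (so $\mu^k \in \cS(\phi)$), or some index $j$ gives a boundary equality, in which case set $\mu^{k+1} = \mu^k - \delta e_j$. Because $L^n_{n+1}(\cdot;\phi^{\sigma_j})$ depends only on the coordinates inside the prefix (which excludes $j$), the perturbation turns the equality into a strict inequality, so strict greedy for $\mu^{k+1}$ extends the prefix by at least one. Since the prefix length is bounded by $N$, the procedure terminates in at most $N$ iterations, either in the good case or with some iterate $\mu^K \in \cS(\phi)$.

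The main obstacle is excluding this second alternative. By choosing $\delta>0$ sufficiently small, the cumulative perturbation $\mu^K = \lambda - \delta\sum_k e_{j_k}$ stays inside any prescribed neighborhood of $\lambda$; such a neighborhood disjoint from $\cS(\phi)$ exists by the hypothesis $\lambda \notin \overline{\cS(\phi)}$ combined with the openness of $\cS(\phi)$ proved above. Therefore $\mu^K \notin \cS(\phi)$, and the procedure must terminate in the good case, producing the desired $\mu \le \lambda$. The role of the closure hypothesis is essential here: at boundary points of $\cS(\phi)$ the stable-prefix and unstable-tail inequalities can be simultaneously weak, so without it the perturbation could land inside the stability region and prevent Theorem~\ref{the:unstableDecr} from being applied.
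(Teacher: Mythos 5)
Your proposal is correct, and its overall architecture coincides with the paper's: stability via Theorem~\ref{the:stableDecr} and Proposition~\ref{the:stableMarginals}, openness via continuity of the $L$-functionals (the paper's Lemma~\ref{the:phiContinuity}, proved exactly as you sketch from stochastic monotonicity, tightness, and Lemma~\ref{the:continuity}), and instability by producing some $\mu\le\lambda$ lying in the ``strictly unstable'' set of Theorem~\ref{the:unstableDecr} and transferring instability back to $\lambda$ through Lemma~\ref{the:comparison} and Proposition~\ref{the:comparisonStability}. The one place where you genuinely diverge is the construction of $\mu$. The paper decreases \emph{all} coordinates uniformly, $\tilde\lambda=\lambda-re$, and simultaneously tracks the critical index $n(\lambda,\sigma)$ for every permutation $\sigma$; because prefix coordinates are also decreased, it must separately invoke the monotonicity of $L_n^{n-1}$ in the prefix arrival rates to show that a boundary equality turns into a strict stability inequality, and termination comes from the fact that $\sum_\sigma n(\cdot,\sigma)$ strictly increases along the iteration. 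Your targeted perturbation $-\delta e_j$ of a single non-prefix coordinate achieving equality is cleaner: since $L^n_{n+1}(\cdot;\phi^{\sigma_j})$ does not depend on $\mu_j$ and the prefix inequalities do not involve $\mu_j$ at all, the equality becomes strict and the maximal prefix length strictly increases, giving termination in at most $N$ steps with total displacement at most $N\delta$ per coordinate. Both arguments use $\lambda\notin\overline{\cS(\phi)}$ in the same way, to keep all iterates out of $\cS(\phi)$ and thereby force termination in the ``good case.'' Two small points worth making explicit in a write-up: the quantity $L^n_{n+1}(\mu_{\sigma(1)},\dots,\mu_{\sigma(n)};\phi^{\sigma_j})$ depends on the prefix only as a \emph{set} (so the failure-to-extend test is well posed), and $\delta$ must also be taken small enough that all iterates remain in $(0,\infty)^N$ so that Theorems~\ref{the:stableDecr} and~\ref{the:unstableDecr} apply.
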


Note that it is not possible to characterize the stability of the system for arrival rate vectors
belonging to the boundary of $\cS(\phi)$ without more detailed information on the allocation
function~$\phi$. Consider for example the one-server system where $\phi_1(x_1) =
(1+1/{x_1})^\alpha$ for some $\alpha \ge 0$. Then $\cS(\phi) = \{\lambda_1: \lambda_1<1\}$, and
the system with $\lambda_1 = 1$ is positive recurrent if $\alpha>1$ and null recurrent otherwise
\cite[Section III.2]{asmussen2003}.

\begin{lemma}
\label{the:phiContinuity}
Let $\phi = (\phi_1,\dots,\phi_N)$ be bounded and partially decreasing. Then for all
$n=1,\dots,N$, the set
\[
S_n(\phi) = \left\{\lambda\in (0,\infty)^N: \lambda_i < L_i^{i-1}(\lambda_1,\dots,\lambda_{i-1};
\phi) \ \forall i=1,\dots,n \right\}
\]
is open, and the function $f^n = (f^n_1,\dots,f^n_n)$ on $(0,\infty)^N$ defined by
\begin{equation}
\label{eq:etaFunction}
f_i^n(\lambda_1,\dots,\lambda_N) = L_i^{i-1}(\lambda_1,\dots,\lambda_{i-1}; \phi)
\end{equation}
is continuous on $S_{n-1}(\phi)$.
\end{lemma}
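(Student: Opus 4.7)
I would proceed by induction on $n$. At step $n-1$ the hypothesis asserts openness of $S_{n-1}(\phi)$ and continuity of $f^{n-1}$ on $S_{n-2}(\phi)$. Since
\[
S_n(\phi) = S_{n-1}(\phi) \cap \{\lambda \in (0,\infty)^N : \lambda_n < f^n_n(\lambda)\},
\]
openness of $S_n(\phi)$ will follow immediately from openness of $S_{n-1}(\phi)$ together with continuity of $f^n_n$ on $S_{n-1}(\phi)$. Moreover, for $i<n$ the function $f^n_i$ depends only on $\lambda_1,\dots,\lambda_{i-1}$ and coincides with $f^{n-1}_i$, which by induction is continuous on $S_{n-2}(\phi) \supseteq S_{n-1}(\phi)$. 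Thus the entire inductive step reduces to proving that $f^n_n(\lambda) = L^{n-1}_n(\lambda_1,\dots,\lambda_{n-1};\phi)$ is continuous on $S_{n-1}(\phi)$. The base case $n=1$ is trivial because $f^1_1 = \ell^0\phi_1$ is a constant and $\{\lambda : \lambda_1 < \ell^0\phi_1\}$ is open.

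For $\lambda \in S_{n-1}(\phi)$, Lemma~\ref{the:stableY} together with Proposition~\ref{the:stationaryDistribution} guarantee that the process $Y^{n-1}_\lambda$ (with birth rates $\lambda_1,\dots,\lambda_{n-1}$ and death rates $\ell^{n-1}\phi_i$) has a unique stationary distribution $\pi^{n-1}_\lambda$. Since $\phi$ is bounded, so is $\ell^{n-1}\phi_n$, and hence to establish continuity of
\[
L^{n-1}_n(\lambda;\phi) = \sum_{x \in \Z_+^{n-1}} \ell^{n-1}\phi_n(x)\,\pi^{n-1}_\lambda(x)
\]
it suffices to show that the map $\lambda \mapsto \pi^{n-1}_\lambda$, from $S_{n-1}(\phi)$ into the space of probability measures on $\Z_+^{n-1}$ equipped with weak convergence, is continuous.

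Let $\lambda^{(k)} \to \lambda^*$ in $S_{n-1}(\phi)$. Using openness of $S_{n-1}(\phi)$ from the induction hypothesis, pick $\lambda^+ \in S_{n-1}(\phi)$ with $\lambda^{(k)} \le \lambda^+$ componentwise for all large $k$. Because $\phi$ is partially decreasing, the same is true of $\ell^{n-1}\phi$, so Lemma~\ref{the:comparison} applied to $Y^{n-1}_{\lambda^{(k)}}$ and $Y^{n-1}_{\lambda^+}$ started in a common state gives $Y^{n-1}_{\lambda^{(k)}} \lest Y^{n-1}_{\lambda^+}$; letting $t \to \infty$ yields $\pi^{n-1}_{\lambda^{(k)}} \lest \pi^{n-1}_{\lambda^+}$, so the family $\{\pi^{n-1}_{\lambda^{(k)}}\}_k$ is tight. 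Lemma~\ref{the:continuity} of Appendix~\ref{sec:perturbation} then identifies each weak subsequential limit as the unique stationary distribution $\pi^{n-1}_{\lambda^*}$ of $Y^{n-1}_{\lambda^*}$, yielding the required weak continuity and completing the induction. The main obstacle is precisely this weak continuity of the stationary distribution: tightness is an easy consequence of the stochastic monotonicity in Lemma~\ref{the:comparison}, but identifying the limit as the correct invariant measure of the limiting process crucially invokes the perturbation lemma from the appendix.
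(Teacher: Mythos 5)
Your proposal is correct and follows essentially the same route as the paper: an induction in which openness of $S_n(\phi)$ and continuity of the next coordinate of $f$ are proved together, with the key step being weak continuity of $\lambda\mapsto\pi^{n-1}_\lambda$ obtained from tightness via the stochastic comparison of Lemma~\ref{the:comparison} and identification of the limit via Lemma~\ref{the:continuity}. The only difference is a harmless reindexing of the induction; the substance matches the paper's argument.
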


\begin{proof}
First observe that the function $f^1$, being a constant, is continuous on the open set $S_0(\phi)
= (0,\infty)^N$. To proceed by induction, let us next assume that $S_{n-1}(\phi)$ is open and
$f^n$ is continuous in $S_{n-1}(\phi)$ for some~$n$. To show that $S_n(\phi)$ is open, assume that
$S_n(\phi)$ is nonempty, and choose a vector $\lambda \in S_n(\phi)$. Then
\begin{equation}
\label{eq:lambdacont}
\lambda_{n} < L_n^{n-1}(\lambda_1,\dots,\lambda_{n-1}; \phi).
\end{equation}
Moreover, because $S_n(\phi) \subseteq S_{n-1}(\phi)$, it follows that $f^n$ is continuous
at~$\lambda$, so in particular the map $\eta \mapsto L_n^{n-1}(\eta_1,\dots,\eta_{n-1}; \phi)$ is
continuous at~$\lambda$. This implies that~\reff{eq:lambdacont} remains valid in some open
neighborhood $B_\lambda$ of~$\lambda$. Further, because $S_{n-1}(\phi)$ is open, there is another
open neighborhood~$B_\lambda'$ of~$\lambda$ such that $B_{\lambda}' \subseteq S_{n-1}(\phi)$. It
follows that $B_\lambda \cap B_\lambda' \subseteq S_n(\phi)$, so we may conclude that $S_n(\phi)$
is open.

To complete the induction, we next prove the continuity of~$f^{n+1}$ on $S_n(\phi)$, under the
assumptions that $S_n(\phi)$ is open and $f^n$ is continuous on~$S_{n-1}(\phi)$. If $\lambda \in
S_{n}(\phi)$, then $f^n$ is continuous at~$\lambda$, because $S_n(\phi) \subseteq S_{n-1}(\phi)$.
Because the first~$n$ coordinate functions of~$f^{n+1}$ coincide with~$f^n$, it is sufficient to
prove that the function $\eta \mapsto L_{n+1}^n(\eta_1,\dots,\eta_n; \phi)$ is continuous
at~$\lambda$. Thus, let $\lambda^k$ be a sequence converging to~$\lambda$. Because $S_n(\phi)$ is
open, we may assume without loss of generality that there exists a vector $\lambda' \in S_n(\phi)$
such that $\lambda^k \le \lambda'$ for all~$k$. Let $Y$ be the $n$-class birth and death process
with birth rates~$\lambda_i$ and death rates $\ell^n \phi_i(x)$, $i = 1,\dots,n$, and let $Y^k$
and $Y'$ be the corresponding processes with $\lambda$ replaced by~$\lambda^k$ and~$\lambda'$,
respectively. Then by Lemma~\ref{the:stableY}, all of the processes~$Y$, $Y^k$, and~$Y'$ are
stable, so we denote their stationary distributions by~$\pi$, $\pi^k$, and~$\pi'$, respectively.
Moreover, because $\ell^n\phi$ is partially decreasing, Lemma~\ref{the:comparison} shows that
$\pi^k \lest \pi'$ for all~$k$, so the family $\{\pi^k\}_{k \ge 0}$ is tight. Hence, we can apply
Lemma~\ref{the:continuity} (Appendix~\ref{sec:perturbation}) to see that $\pi^k \to \pi$ weakly,
so it follows from the boundedness of~$\phi$ that $L_{n+1}^n(\lambda_1^k,\dots,\lambda_n^k; \phi)
\to L_{n+1}^{n}(\lambda_1,\dots,\lambda_{n}; \phi)$. This shows that $f^{n+1}$ is continuous
at~$\lambda$.
\end{proof}

\begin{proof}[Proof of Theorem~\ref{the:boundaryDecr}]
First, let us note that by using Lemma~\ref{the:phiContinuity} and relabeling the classes if
necessary, we see that the set $\{\lambda: \lambda^\sigma \in S_N(\phi^\sigma)\}$ is open for
all~$\sigma$. Hence, the set $\cS(\phi)$ is open. Moreover, by again relabeling the classes if
necessary, we find using Theorem~\ref{the:stableDecr} that $\lambda \in \cS(\phi)$ implies the
stability of the queueing system.

To study the instability of the system, let us define analogously to~\reff{eq:defS} the set
\[
\cU(\phi) = \bigcup_\sigma \left\{ \lambda\in (0,\infty)^N: \lambda^\sigma \in U_N(\phi^\sigma)
\right\},
\]
where the union is taken over all permutations on $\{1,\dots,N\}$, and $U_N(\phi)$ is defined as
the set of $\lambda \in (0,\infty)^N$ such that the inequalities~\reff{eq:unstableDecrCond}
and~\reff{eq:unstableDecrCond2} are valid for some $n \in \{0,\dots,N-1\}$. Then by first
relabeling the classes if necessary, and then using Theorem~\ref{the:unstableDecr}, we see that
the system is instable for all $\lambda \in \cU(\phi)$.

It remains to show the instability of the system under the assumption that $\lambda$ belongs to
the complement of the closure of~$\cS(\phi)$ in $(0,\infty)^N$, which we denote by $\ext(\cS)$. We
prove this by showing that for each $\lambda \in \ext(\cS)$ there exists a $\tilde\lambda \in
\cU(\phi)$ such that $\tilde\lambda \le \lambda$ and then applying
Proposition~\ref{the:comparisonStability}.

Let $\lambda \in \ext(\cS)$. Given a permutation~$\sigma$, let us define $n(\lambda,\sigma)$ as
the largest integer~$n$ such that
\begin{equation}
\label{eq:condi}
\lambda^\sigma_i < L_i^{i-1}(\lambda^\sigma_1,\dots,\lambda^\sigma_{i-1}; \phi^\sigma) \quad
\text{for all $i<n$}.
\end{equation}
Because $\lambda \notin \cS(\phi)$, we know that $n(\lambda,\sigma) < N$ for each~$\sigma$, and
\begin{equation}
\label{eq:notCondn}
\lambda^\sigma_i \ge L_i^{i-1}(\lambda^\sigma_1,\dots,\lambda^\sigma_{i-1}; \phi^\sigma) \quad
\text{for $i=n(\lambda,\sigma)$}.
\end{equation}

Let $D$ be the set of~$\sigma$ for which~\reff{eq:notCondn} is strict, and assume $\sigma \in D$.
Then by Lemma~\ref{the:phiContinuity} we see that the function
\begin{equation}
\label{eq:function}
(\lambda^\sigma_1,\dots,\lambda^\sigma_{n-1}) \mapsto \left(L_1^{0}(\phi^\sigma), \dots,
L_n^{n-1}(\lambda_1^\sigma,\dots,\lambda^\sigma_{n-1};\phi^\sigma) \right)
\end{equation}
is continuous in a neighborhood of $(\lambda^\sigma_1,\dots,\lambda^\sigma_{n-1})$. Thus we can
choose an $\epsilon_\sigma \in (0, \min_j \lambda_j)$ such that the inequalities~\reff{eq:condi}
remain valid and the inequality~\reff{eq:notCondn} remains strict when $\lambda$ is replaced
by~$\tilde\lambda$ such that $|| \tilde\lambda - \lambda || < \epsilon_\sigma$.

On the other hand, if $\sigma \notin D$, then again by the continuity of the function
in~\reff{eq:function}, there exists an $\epsilon_\sigma \in (0, \min_j \lambda_j)$ such that the
inequalities~\reff{eq:condi} remain valid when $\lambda$ is replaced by $\tilde\lambda$ such that
$|| \tilde\lambda - \lambda || < \epsilon_\sigma$. Moreover, if we let $\tilde\lambda = \lambda -
r e$ for some $r \in (0,\epsilon_\sigma)$, then it can be checked using Lemma~\ref{the:comparison}
that $L_n^{n-1}(\lambda^\sigma_1,\dots,\lambda^\sigma_{n-1}; \phi^\sigma) \le L_n^{n-1}(\tilde
\lambda^\sigma_1,\dots, \tilde \lambda^\sigma_{n-1}; \phi^\sigma)$, because the function
$\ell^{n-1}\phi^\sigma$ is partially decreasing, and in particular, $\ell^{n-1}\phi_n^\sigma$ is
decreasing. Thus we see that for $n = n(\lambda,\sigma)$,
\begin{align*}
\tilde\lambda_n^\sigma < \lambda_n^\sigma
&= L_n^{n-1}(\lambda^\sigma_1,\dots,\lambda^\sigma_{n-1}; \phi^\sigma) \\
&\le L_n^{n-1}(\tilde \lambda^\sigma_1,\dots, \tilde \lambda^\sigma_{n-1}; \phi^\sigma).
\end{align*}

Let $\epsilon = \min_{\sigma} \epsilon_\sigma$ and choose a small enough $r \in (0,\epsilon)$ such
that $\tilde\lambda = \lambda - r e$ belongs to $\ext(\cS)$. Then it follows from the above
observations that for all~$\sigma$,
\begin{align*}
 \tilde\lambda^\sigma_i &<
 L_i^{i-1}(\tilde\lambda_1^\sigma,\dots,\tilde\lambda^\sigma_{i-1}; \phi^\sigma),
 &\quad i<n(\lambda,\sigma), \\
 \tilde \lambda_i^\sigma &\neq
 L_i^{i-1}(\tilde\lambda_1^\sigma,\dots,\tilde\lambda^\sigma_{i-1}; \phi^\sigma),
 &\quad i=n(\lambda,\sigma).
\end{align*}
Hence, either $\tilde\lambda_n^\sigma > L_n^{n-1}(\tilde\lambda_1^\sigma, \dots,
\tilde\lambda^\sigma_{n-1}; \phi^\sigma)$ for all~$\sigma$ and all $n = n(\lambda,\sigma)$, which
implies that $\tilde\lambda \in \cU(\phi)$; or else,
\begin{equation}
\label{eq:nineq}
n(\tilde\lambda,\sigma) \ge n(\lambda,\sigma) \quad\text{for all $\sigma$},
\end{equation}
and $\tilde \lambda_n^\sigma < L_n^{n-1}(\tilde\lambda_1^\sigma, \dots,
\tilde\lambda^\sigma_{n-1}; \phi^\sigma)$ for at least some~$\sigma$ and $n=n(\lambda,\sigma)$,
which implies that~\reff{eq:nineq} holds strictly for at least one~$\sigma$.

Assuming $\lambda^k \in \ext(\cS) \setminus \cU(\phi)$, we can apply the above procedure to
$\lambda^k$ to find a $\lambda^{k+1} \in \ext(\cS)$ such that $\lambda^{k+1} \le \lambda^k$, and
so that either $\lambda^{k+1} \in \cU(\phi)$, or else
\begin{equation}
\label{eq:nineq2}
n(\lambda^{k+1},\sigma) \ge n(\lambda^k,\sigma) \quad\text{for all $\sigma$},
\end{equation}
where~\reff{eq:nineq2} holds strictly for at least one~$\sigma$. The sequence $\lambda^k$ started
in $\lambda^1 = \lambda$ must hit $\cU(\phi)$ for some~$k$, because otherwise $n(\lambda^k,\sigma)
= N+1$ eventually for some~$k$ and some~$\sigma$, which would imply that $\lambda^k \in
\cS(\phi)$. Hence, $\lambda^k \in \cU$ for some~$k$, and $\lambda^k \le \lambda$. Now by
Proposition~\ref{the:comparisonStability}, it follows that the system is unstable.
\end{proof}

\section{Applications}
\label{sec:applications}

\subsection{Three weakly coupled queues}
\label{sec:threeProcessors}

Consider a system of three queues where the service rates at each queue only depend on whether the
other queues are empty or not, so that for all $i\neq j\neq k$,
\[
 \phi_i(x) = \left\{
 \begin{aligned}
  a_i   , \quad & x_j=0, \ x_k=0,\\
  a_{ij}, \quad & x_j>0, \ x_k=0,\\
  1,      \quad & x_j>0, \ x_k>0.
 \end{aligned}
 \right.
\]
Let us assume $a_i \ge a_{ij} \ge 1$, so that $\phi=(\phi_1,\phi_2,\phi_3)$ is partially
decreasing.

Theorem~\ref{the:boundaryDecr} shows that the stability region is a union of six regions
corresponding to the six possible permutations of the queues. The first of these regions
corresponding to the identity permutation is the set of $(\lambda_1,\lambda_2,\lambda_3)$ such
that
\begin{align}
\lambda_1 &< 1, \label{c31} \\
\lambda_2 &< \lambda_1 + a_{23} (1 -\lambda_1) \label{c32}, \\
\lambda_3 &< a_3 \pi_{00} + a_{31} \pi_{10} + a_{32} \pi_{01} + \pi_{11} \label{c33},
\end{align}
where
\begin{align*}
\pi_{00} &= \pr(Y_1=0,Y_2=0), \\
\pi_{01} &= \pr(Y_1=0,Y_2>0), \\
\pi_{10} &= \pr(Y_1>0,Y_2=0), \\
\pi_{11} &= \pr(Y_1>0,Y_2>0),
\end{align*}
and $Y=(Y_1,Y_2)$ is a random vector distributed according to the stationary number of customers
in queues~1 and~2 given that the length of queue~3 is infinite, which is well-defined when
inequalities~\reff{c31} and~\reff{c32} hold. To the best of our knowledge, there are no
closed-form expressions available for the probabilities $\pi_{00}, \pi_{01}, \pi_{10}, \pi_{11}$,
so they must be evaluated numerically.

The above result coincides with the stability characterization derived earlier by Fayolle,
Malyshev, and Menshikov using Foster--Lyapunov criteria for deflected random walks
in~$\Z_+^3$~\cite[Theorem 4.4.4]{fayolle1995}.

\subsection{Two interfering wireless base stations with channel-aware scheduling}
\label{sec:twoProcessors}

We will now turn our attention to a system of two queues served at the state-dependent rates
\[
 \phi_i(x) = g_i(x_i) h_i(x),
\]
where $g_i$ is a bounded increasing function on $\Z_+$, and $h_i$ is a decreasing function on
$\Z_+^2$. This particular form of allocation function arises as a model for two interfering
wireless base stations operating according to a channel-aware scheduling discipline, where the
functions $h_i$ model the interference between the base stations, and the functions~$g_i$
represent the scheduling gain, which increases in the number of customers due to multiuser
diversity~\cite{liu2003}. Denote
\begin{align*}
 g_i^* &= \lim_{x_i\to \infty} g_i(x_i), \\
 h_i^* &= \lim_{r \to \infty} \sup_{x_1,x_2 \ge r} h_i(x), \\
 h_i^j(x_i) &= \limsup_{x_j \to \infty} h_i(x), \quad j\neq i,
\end{align*}
and let
\[
 \bar h_j^i(\lambda_i,\phi) = \sum_{x_i} h_j^i(x_i) \, \pi^i(x_i), \quad j\neq i,
\]
where $\pi^i$ is the probability distribution ($c$ is a normalization constant) given by
\[
 \pi^i(x_i) = c \prod_{z=1}^{x_i}\frac{\lambda_i}{g_i(z) h_i^j(z)}, \quad j \neq i.
\]
The stability region can now be described using Theorem~\ref{the:boundaryDecr} as the set of
$(\lambda_1,\lambda_2)$ such that either
\begin{eqnarray*}
 \lambda_1 < g_1^* h_1^*
 &\mbox{ and }& \lambda_2 < g_2^* \bar h_2^1(\lambda_1,\phi), \\
 \mbox{or } \lambda_2 < g_2^* h_2^*
 &\mbox{ and }& \lambda_1 < g_1^* \bar h_1^2(\lambda_2,\phi).
\end{eqnarray*}

Figure~\ref{fig5} represents the full and partial stability regions of the system, where the
scheduling gains are given by
\begin{equation}
 \label{eq:gain}
 g_i(x_i) = \min(3,\log(1+x_i)),
\end{equation}
and the interference functions are of the form
\begin{equation}
 \label{eq:interference1}
 h_i(x_j) = \frac{1}{6 - 4 e^{-\gamma x_j}}, \quad j\neq i.
\end{equation}
\begin{figure}[h]
 \begin{center}
   \psfrag{S}{$\cS$}
   \psfrag{S1}{$\cS_1$}
   \psfrag{S2}{$\cS_2$}
   \psfrag{I}{$\cU$}
   \psfrag{l1}{$\lambda_1$}
   \psfrag{l2}{$\lambda_2$}
   \includegraphics[width=6cm,angle=0]{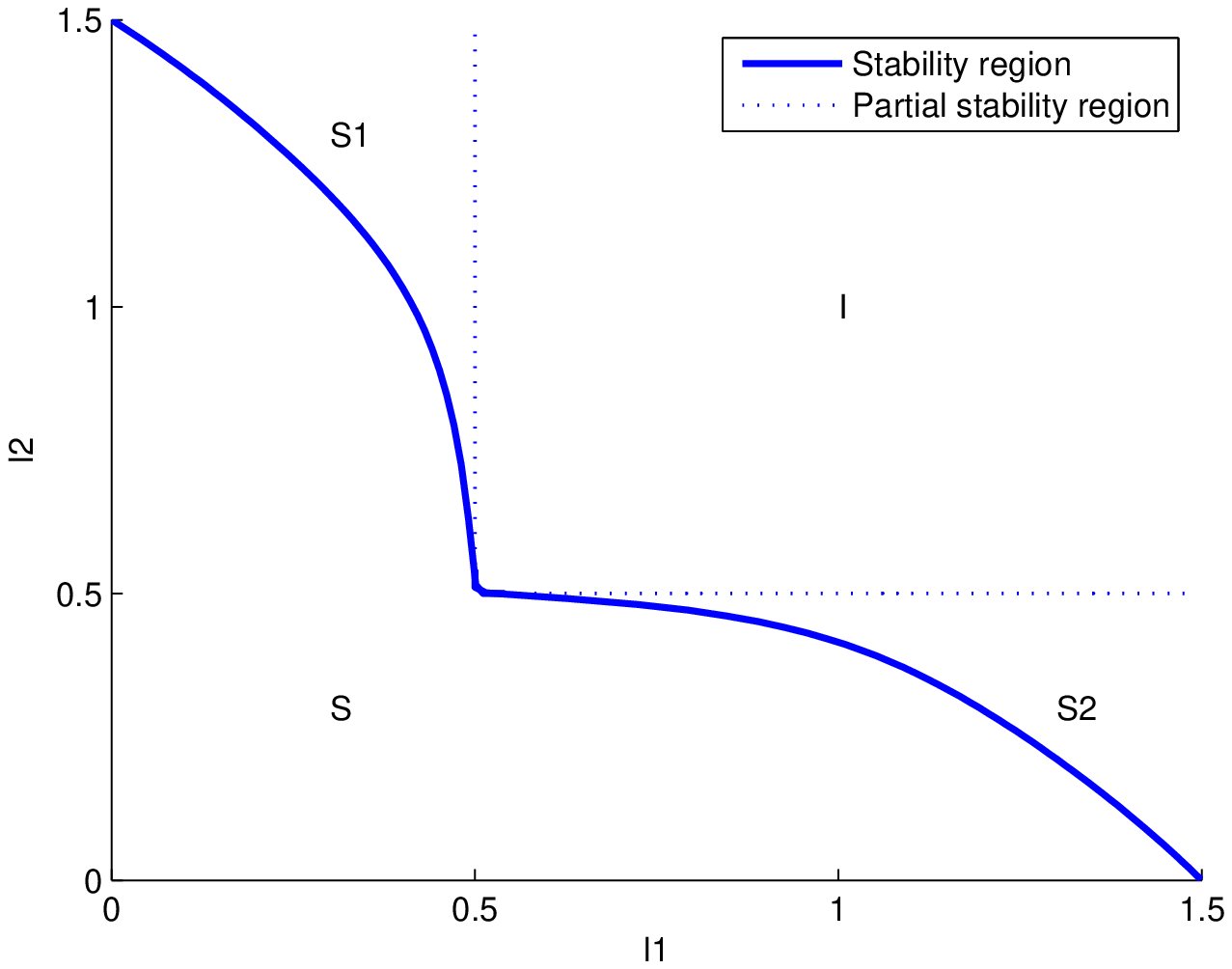}
   \includegraphics[width=6cm,angle=0]{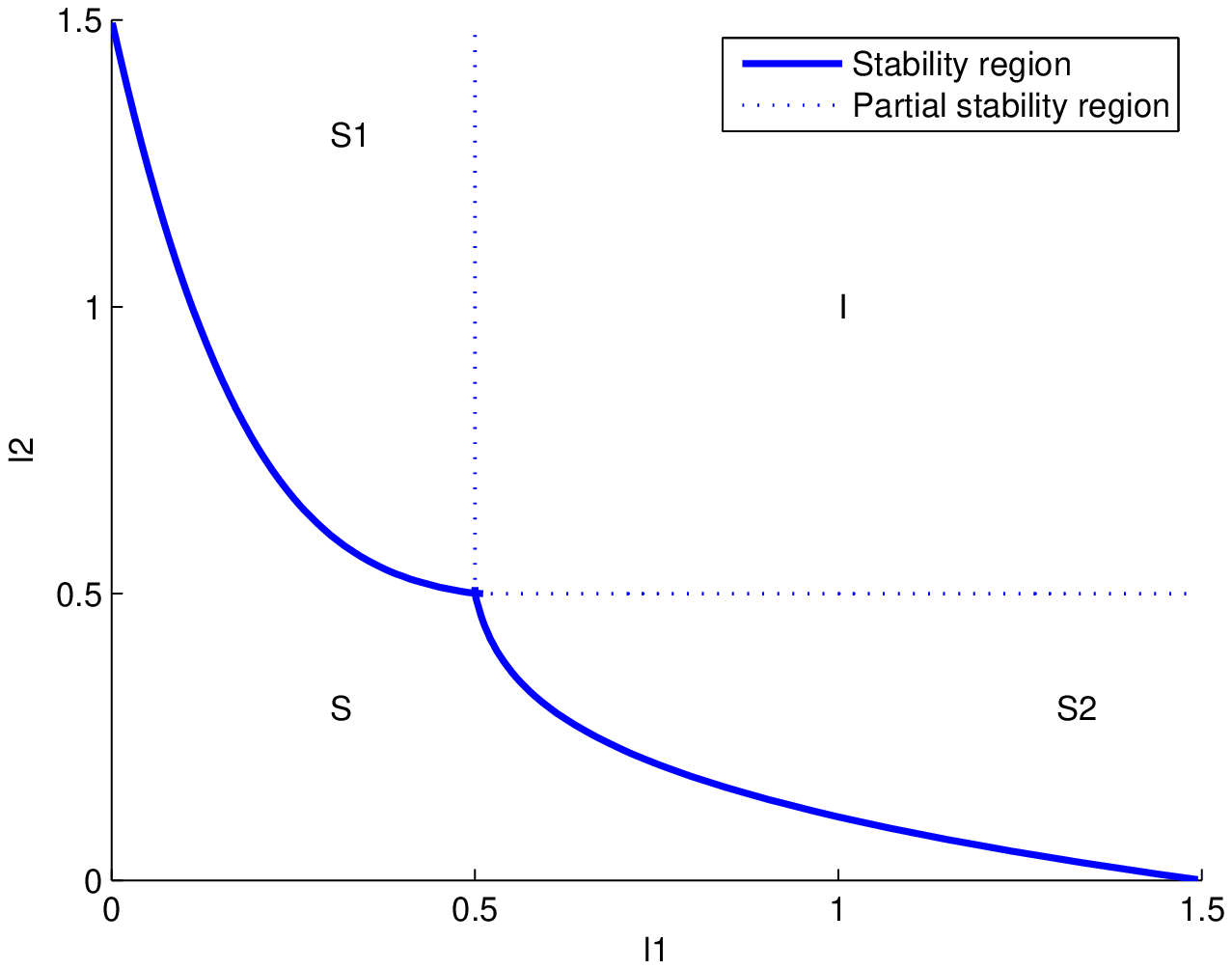}
   \caption{Stability regions for interference functions of the form~\reff{eq:interference1} with $\gamma=0.05$ (left) and $\gamma=2.0$ (right).}
   \label{fig5}
 \end{center}
\end{figure}
The area $\cS$ is the set of arrival rates such that both queues are stable, while the areas
$\cS_i$ correspond to the set of arrival rates so that only queue $i$ is stable, and $\cU$ is the
set of arrival rates where both queues are unstable. Figure~\ref{fig6} illustrates the
corresponding stability regions when $g_i$ are as in~\reff{eq:gain} and
\begin{equation}
 \label{eq:interference2}
 h_i(x_j) = \frac{1}{ 6 - 4(1+x_j)^{-\gamma} }, \quad j \neq i.
\end{equation}

\begin{figure}[h]
 \begin{center}
   \psfrag{S}{$\cS$}
   \psfrag{S1}{$\cS_1$}
   \psfrag{S2}{$\cS_2$}
   \psfrag{I}{$\cU$}
   \psfrag{l1}{$\lambda_1$}
   \psfrag{l2}{$\lambda_2$}
   \includegraphics[width=6cm,angle=0]{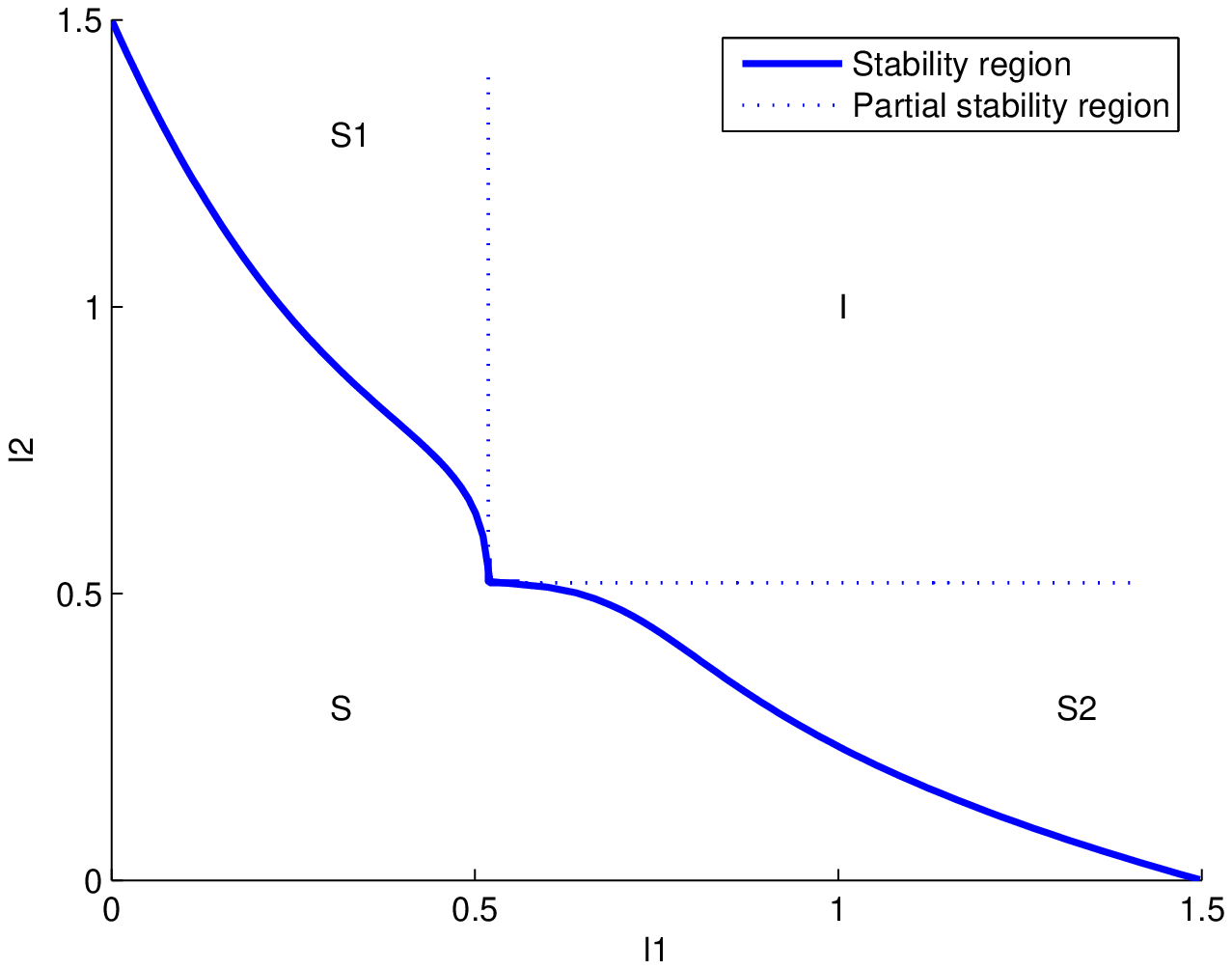}
   \includegraphics[width=6cm,angle=0]{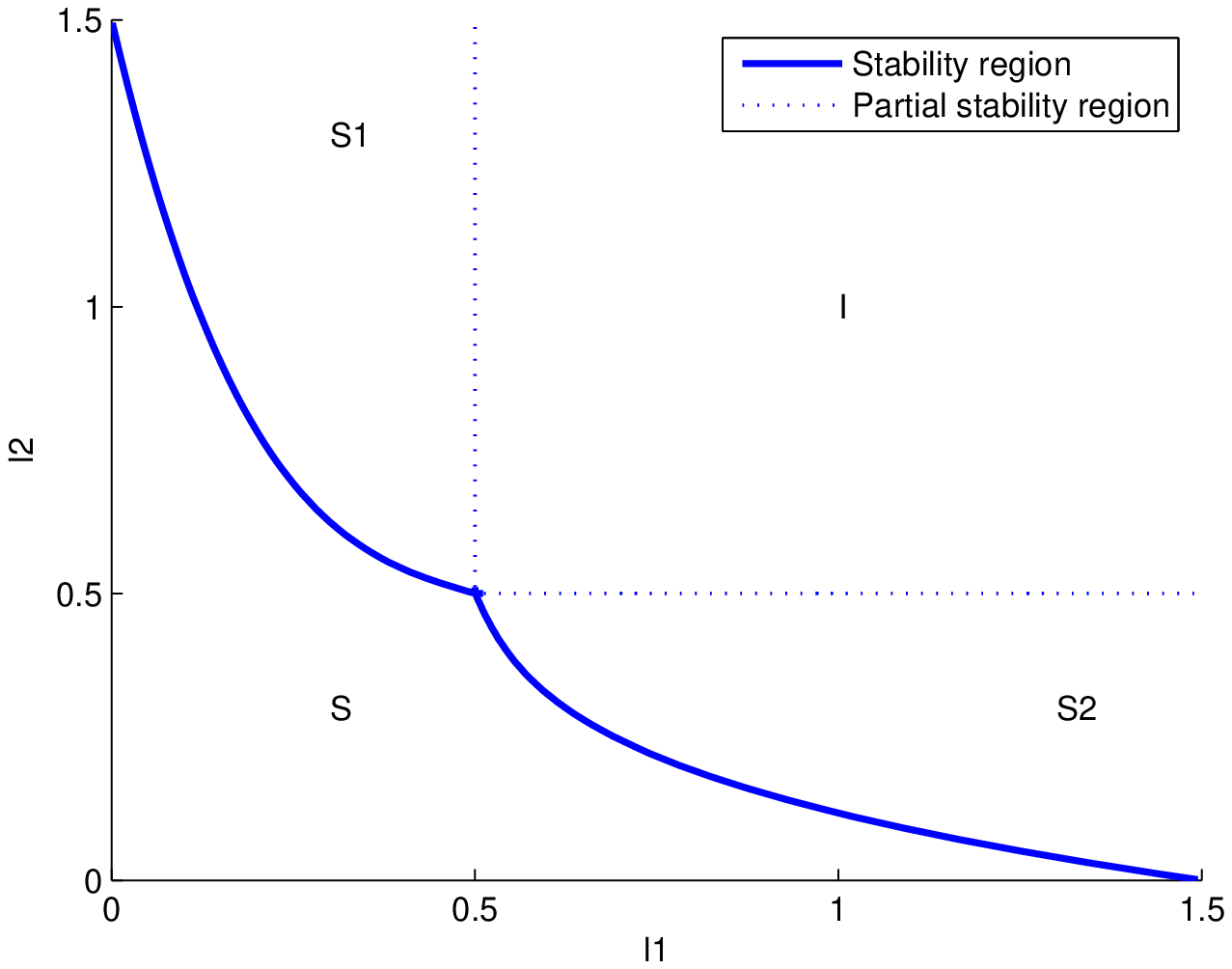}
   \caption{Stability regions for interference functions of the form~\reff{eq:interference2} with $\gamma=0.4$ (left) and $\gamma=2.0$ (right).}
   \label{fig6}
 \end{center}
\end{figure}

\section{Conclusion}
\label{sec:conclusion}
We provided sufficient and necessary conditions for the stability of a parallel queueing system
with coupled service rates, and showed that these conditions are sharp when the service rate at
each queue is decreasing in the number of customers in other queues, and has uniform limits as the
queue lengths tend to infinity. Moreover, we presented conditions for partial stability, where
only some of the queues are stable. The most general stability conditions, although not sharp, may
yield useful inner and outer bounds for the stability region of systems that are too complex to
characterize exactly. An interesting and important direction for future research is to study
whether the given results extend to the case where the service allocation does not have uniform
limits, and the service times distributions are nonexponential.

\subsection*{Acknowledgements} The work reported in this paper was carried out while
M.~Jonckheere and L.~Leskelä were employed by the Centrum Wiskunde \& Informatica, the
Netherlands. The research has been supported by the Dutch BSIK/BRICKS PDC2.1 project.

\appendix

\section{Appendix}
\label{sec:appendix}

\subsection{Small perturbations of transition rates}
\label{sec:perturbation}

\begin{lemma}
\label{the:continuity}
Let $X$ and $X^n$ be continuous-time Markov processes on a countable state space having transition
rates $q(x,y)$ and $q^n(x,y)$, and unique stationary distributions~$\pi$ and~$\pi^n$,
respectively. Assume that
\begin{enumerate}[(i)]
\item $q^n(x,y) \to q(x,y)$ as $n \to \infty$ for all~$x$ and $y$,
\item the set $\{ x: q^n(x,y) \neq 0 \ \text{for some} \ n \}$ is finite for all $y$,
\item $\{\pi^n\}_{n \ge 0}$ is a tight family of probability measures.
\end{enumerate}
Then $\pi^n(x) \to \pi(x)$ for all~$x$.
\end{lemma}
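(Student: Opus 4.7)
The plan is a standard tightness-plus-subsequence argument: show that any weak limit of $\{\pi^n\}$ is invariant for the limit generator $Q$, and then invoke uniqueness of the stationary distribution $\pi$.

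By hypothesis (iii), every subsequence of $\{\pi^n\}$ has a further subsequence converging pointwise on the countable state space to some limit $\pi^*$; tightness precludes escape of mass, so $\pi^*$ is a probability measure. Write down the global balance equation for $\pi^n$ at an arbitrary state $y$,
\[
 \pi^n(y)\, q^n(y) \;=\; \sum_{x \neq y} \pi^n(x)\, q^n(x,y),
 \qquad q^n(y) = \sum_{z \neq y} q^n(y,z),
\]
and pass to the limit along the subsequence. By hypothesis (ii) the right-hand side is, uniformly in $n$, supported on a fixed finite set of predecessors of $y$; together with (i) and pointwise convergence of $\pi^n$, it converges to $\sum_{x \neq y} \pi^*(x)\, q(x,y)$. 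Fatou's lemma applied to the non-negative series defining $q^n(y)$ yields $\liminf_n q^n(y) \ge q(y)$, hence $\liminf_n \pi^n(y) q^n(y) \ge \pi^*(y) q(y)$. Combining the two sides gives the sub-invariance inequality
\[
 \pi^*(y)\, q(y) \;\le\; \sum_{x \neq y} \pi^*(x)\, q(x,y) \qquad \text{for every } y.
\]

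To upgrade this to equality, sum over $y$ and apply Tonelli's theorem to the non-negative double sum on the right: this yields $\sum_y \sum_{x \neq y} \pi^*(x)\, q(x,y) = \sum_x \pi^*(x)\, q(x) = \sum_y \pi^*(y)\, q(y)$, so the summed inequality is actually an equality. In the applications made of this lemma in the main text the jump rates are bounded, so the common sum is finite and each pointwise inequality must itself be an equality. Thus $\pi^* Q = 0$, and uniqueness forces $\pi^* = \pi$. Since every subsequence admits a sub-subsequence with the same limit $\pi$, the full sequence satisfies $\pi^n(x) \to \pi(x)$ for every $x$.

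The delicate step is the promotion of the Fatou inequality to equality: if the total flow $\sum_y \pi^*(y) q(y)$ were infinite, one could not simply subtract the two sides to cancel the non-negative slack. Under the boundedness of the jump rates present in all of the paper's applications this difficulty is absent, but a fully general version of the lemma would require either a truncation argument on the state space or a uniform-integrability-type control of the tails of $q^n(y,\cdot)$, beyond what is extracted from condition (ii) alone.
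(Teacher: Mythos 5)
Your overall strategy --- extract a weakly convergent subsequence by tightness, pass the stationarity equations to the limit using (i) and (ii), and invoke uniqueness of $\pi$, then conclude for the full sequence --- is exactly the paper's. The one place you genuinely diverge is the treatment of the exit-flow term $\pi^n(y)\,q^n(y)$. The paper writes the balance equation in generator form, $\sum_x \pi^n(x)\,q^n(x,y)=0$, where the $x=y$ term is $-\pi^n(y)\,q^n(y)$; hypothesis (ii) makes this a sum over a fixed finite set, and hypothesis (i) --- read as convergence of all entries of the generator matrix, diagonal included, hence $q^n(y)\to q(y)$ --- lets one pass to the limit termwise with no further ado. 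You instead extract only $\liminf_n q^n(y)\ge q(y)$ from Fatou, obtain a sub-invariance inequality, and must then sum over $y$ and use Tonelli to recover equality. That step is correct as far as it goes, but, as you candidly observe, it closes the argument only when the total flow $\sum_y \pi^*(y)\,q(y)$ is finite --- e.g.\ under bounded rates, which do hold everywhere the lemma is invoked in the paper (constant birth rates, bounded death rates). So your proof establishes the lemma in the generality actually needed, at the price of a caveat that the paper's reading of (i) renders unnecessary; what your route buys in exchange is that it does not require convergence of the total exit rates, only of the individual off-diagonal entries. The final appeal to uniqueness (any probability solution of $\pi^*Q=0$ must be $\pi$) is made at the same level of rigor in both arguments.
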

\begin{proof}
Let us assume that $\pi^n(z)$ does not converge to~$\pi(z)$ for some~$z$. Then there exists an
$\epsilon>0$ and a subsequence $\Z_+' \subseteq \Z_+$ such that $|\pi^n(z) - \pi(z)| \ge \epsilon$
for all $n \in \Z_+'$. Because $\{\pi^n\}_{n \in \Z_+'}$ is tight, there exists a further
subsequence $\Z_+'' \subseteq \Z_+'$ such that $\pi^n$ converges weakly to a probability
measure~$\tpi$ as $n \to \infty$ along~$\Z_+''$ \cite[Proposition 5.21]{kallenberg2002}. Observe
that for all~$y$ and for all~$n$,
\[
\sum_x \pi^n(x) q^n(x,y) = 0.
\]
By virtue of assumption~(ii), we can take $n \to \infty$ along~$\Z_+''$ on both sides of the above
equation, and bring the limit inside the sum, which shows that
\[
\sum_x \tpi(x) q(x,y) = 0.
\]
Because we assumed the stationary distribution of $X$ to be unique, it follows that $\tpi = \pi$,
and hence $\pi^n \to \pi$ weakly along~$\Z_+''$. This is a contradiction, because $|\pi^n(z) -
\pi(z)| \ge \epsilon$ for all $n \in \Z_+'$.
\end{proof}

\subsection{Stable multiclass birth and death processes with
strictly positive birth rates}
\label{sec:stationaryMeasure}

\begin{lemma}
\label{the:increasingSets}
Let $X$ be a $N$-class birth and death process with birth rates~$\lambda_i$ and bounded death
rates $\phi_i(x)$, and assume that $\lambda_i>0$ for all~$i$. Then the hitting time of~$X$ into an
arbitrary increasing set~$A$ is almost surely finite, regardless of the initial state.
\end{lemma}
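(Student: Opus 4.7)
The plan is to reduce the claim to showing that, from any initial state, the process hits a sufficiently large ``upper corner'' of $\Z_+^N$ within a fixed time with probability bounded below by a positive constant \emph{independent of the initial state}; the strong Markov property then lets me iterate to obtain almost-sure hitting.

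Assuming $A$ is nonempty, I first pick any $a \in A$ and set $k = \max_i a_i$. Since $A$ is increasing, writing $\mathbf{1}=(1,\dots,1)$ the upper rectangle $B=\{y\in\Z_+^N : y\ge k\mathbf{1}\}$ is contained in $A$, so it suffices to show that the hitting time of $X$ into $B$ is almost surely finite. To produce the uniform positive lower bound, I would realize $X$ on a common probability space using $2N$ independent Poisson processes: for each $i$, $N_i^b$ of rate $\lambda_i$ drives class-$i$ births, while $N_i^d$ of rate $1$ drives class-$i$ deaths via the time change $t\mapsto\int_0^t \phi_i(X(s))\,ds$ (together with the indicator $1(X_i(t^-)>0)$). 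Letting $\bar\phi=\sup_{i,x}\phi_i(x)<\infty$ and fixing $T=1$, I consider the event
\[
E=\bigcap_{i=1}^N \bigl(\{N_i^b \text{ has at least } k \text{ jumps in } [0,T]\}\cap\{N_i^d \text{ has no jump in } [0,\bar\phi T]\}\bigr).
\]
Since $\int_0^T \phi_i(X(s))\,ds\le\bar\phi T$, on $E$ class $i$ undergoes at least $k$ births and zero deaths on $[0,T]$, so $X_i(T)\ge X_i(0)+k\ge k$ and hence $X(T)\in B$. By independence of the driving processes,
\[
\pr(E)\ \ge\ \Bigl(\prod_{i=1}^N \pr(\mathrm{Pois}(\lambda_i)\ge k)\Bigr)\,e^{-N\bar\phi}\ =:\ p\ >\ 0,
\]
and this bound depends only on $\lambda_1,\dots,\lambda_N$, $\bar\phi$, $N$ and $k$, not on $X(0)$.

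Finally, applying the strong Markov property at the deterministic times $T,2T,3T,\dots$, I would conclude that $\pr_x(X\text{ avoids }B\text{ on }[0,nT])\le(1-p)^n\to 0$, so $\tau_B(X)<\infty$ and therefore $\tau_A(X)<\infty$ almost surely, regardless of the initial state. The main, and rather mild, obstacle is verifying that the lower bound on $\pr(E)$ is genuinely uniform in $X(0)$: this works precisely because $E$ is defined in terms of the driving Poisson processes alone, and the domination $\int_0^T \phi_i(X(s))\,ds\le\bar\phi T$ that makes $E$ imply $B$-hitting uses only the boundedness of $\phi$, not the trajectory.
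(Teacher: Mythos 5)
Your proof is correct and follows essentially the same strategy as the paper's: locate an upper corner $k\mathbf{1}+\Z_+^N$ inside the increasing set, establish a lower bound on the probability of reaching it within a fixed horizon that is positive and uniform in the initial state (by forcing only upward moves), and iterate via the Markov property to get geometric decay. The only difference is in implementation: the paper bounds the probability of $rN$ consecutive upward transitions of the embedded jump chain, whereas you bound the analogous event for the driving Poisson processes in a continuous-time random time-change construction; both produce the same kind of uniform constant for the same reason, namely the boundedness of $\phi$ and the strict positivity of the $\lambda_i$.
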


\begin{proof}
Let $A$ be an increasing set. Then $re \in A$ for some positive integer~$r$, where
$e=(1,\dots,1)\in \Z_+^N$. Let $\hX$ be the discrete-time jump chain of~$X$, with $\hX(n)$ being
the value of~$X$ at its $n$-th jump. Then for all~$x$,
\[
\pr_x(\hX(1) = x+e_i) = \frac{\lambda_i}{\sum_j \lambda_j + \sum_{j:x_j>0} \phi_j(x)},
\]
and because $\hX$ can reach~$A$ from any state~$x$ by taking $r$~upward jumps into all coordinate
directions, we see that $\pr_x(\hX(r^N) \in A) \ge \delta$, where
\[
\delta = \left(\frac{ \min_j \lambda_j }{ \sum_j \lambda_j + N ||\phi||} \right)^{rN} > 0.
\]
By induction, it then follows that for all~$x$ and all~$M$,
\[
\pr_x(\hX(mr^N) \notin A \ \forall m=1,\dots,M) \le (1-\delta)^M.
\]
Thus, by taking $M \to \infty$, we may conclude that $\pr_x(\htau_A < \infty) = 1$, which is
equivalent to $\pr_x(\tau_A < \infty) = 1$.
\end{proof}

\begin{proof}[Proof of Proposition~\ref{the:stationaryDistribution}]

We prove that $(i) \Rightarrow (ii) \Rightarrow (iii)$, the reverse direction being clear. Let us
denote $x \to y$ if the process~$X$ started in~$x$ can reach~$y$, and let $C(x) = \{y: x \to y \
\text{and} \ y \to x\}$ be the communicating class associated with~$x$. Recall that a set~$C$ is
said to be absorbing if $x \to y$ implies $y \in C$ for all $x \in C$. Observe first that because
all birth rates of~$X$ are strictly positive, it follows that for all~$x$ and~$y$,
\begin{equation}
\label{eq:increasingReaching}
x \le y \implies x \to y.
\end{equation}
From~\reff{eq:increasingReaching} we see that all absorbing sets are increasing. Moreover, if a
communicating class~$C$ is increasing, and if $x \to y$ for some $x \in C$, then there exists
a~$z$ such that $x \le z$ and $y \le z$. Hence, $y\to z$ by~\reff{eq:increasingReaching} and $z
\in C$, because $C$ is increasing. Because $C$ is a communicating class, it follows that $y \in
C$. We may thus conclude that any communicating class~$C$ is absorbing if and only if it is
increasing.

We next show that $X$ has a unique absorbing communicating class. Assume first that all
communicating classes are nonabsorbing. Then none of the communicating classes $C(x)$ is
increasing, and Lemma~\ref{the:increasingSets} implies that $\pr_x(\tau_{C(x)^c} < \infty)$ for
all~$x$. Because $\pr_y(\tau_{C(x)} = \infty) = 1$ for all $y \notin C$, it follows that with
probability one, $X$ eventually leaves any finite set without ever returning, regardless of the
initial state. Thus, $X(t) \to \infty$ almost surely, which contradicts the assumption that $X$
started in some initial state, say $x^0$, is stable. Hence, $X$ must have at least one absorbing
communicating class. To see that there is no more than one such class, it suffices to observe that
if $C(x)$ and $C(y)$ are disjoint sets, then they can not both be increasing.

Now let $C$ be the unique absorbing communicating class of~$X$, and assume that $X[x]$ is unstable
for all $x \in C$. Then for any finite set~$K$ and any $0 \le s \le t$ it follows that
\begin{equation}
\label{eq:tauC}
\pr_{x^0}(X(t) \in K, \tau_C \le s) = \sum_{x \in C} \pr_{x^0}(X(s)=x, \tau_C \le s) \pr_x(X(t-s)
\in K),
\end{equation}
because $X(s)$ belongs to the absorbing set~$C$ on the event $\{\tau_C \le s\}$. Hence, by
dominated convergence, $\lim_{t \to \infty} \pr_{x^0}(X(t) \in K, \tau_C \le s) = 0$ for all $s$.
Furthermore, because
\[
\pr_{x^0}(X(t) \in K) \le \pr_{x^0}(\tau_C > s) + \pr_{x^0}(X(t) \in K, \tau_C \le s),
 \]
and because $\tau_C$ is finite almost surely, we see by taking first $t \to \infty$ and then $s
\to \infty$ that $\pr_{x_0}(X(t) \in K) \to 0$, which contradicts the stability of $X[x^0]$.
Hence, we may conclude that $X[y]$ is stable for some $y \in C$. Moreover, because $\pr_x(X(1) =
y) > 0$ for all~$x$, we see that for all finite sets~$K$,
\[
\limsup_{t \to \infty} \pr_x(X(t+1) \in K) \ge \limsup_{t \to \infty} \pr_x(X(1) = y) \pr_y(X(t)
\in K) > 0,
\]
so $X[x]$ is stable for all initial states $x \in \Z_+^N$.

Finally, let $X^C$ be the Markov process on the state space~$C$ with the same transition rates
as~$X$ in~$C$. Then $X^C$ is irreducible and stable, regardless of the initial state. Hence, it
follows \cite[Theorem 12.25]{kallenberg2002} that $X^C$ is positive recurrent, and thus has a
unique stationary distribution~$\pi^C$ on~$C$ such that the distribution of $X^C(t)$ converges
to~$\pi^C$ in total variation. By defining $\pi(B) = \pi^C(B\cap C)$, it follows that $\pi$ is
stationary for the unrestricted version of~$X$, and because $\pr_x(\tau_C < \infty)$ for all~$x$,
one can verify using~\reff{eq:tauC} that the distribution of $X(t)$ converges to~$\pi$ in total
variation, regardless of the initial state.

Having proved the equivalence of (i)--(iii), let now assume that $X[x]$ is unstable for all~$x$
and show that $X(t) \to \infty$ in probability regardless of the initial state. We saw above that
if all communicating classes of~$X$ are nonabsorbing, then $X(t)$ is transient, so let us assume
that $X$ has the unique absorbing class~$C$. Then $X[x]$ is irreducible and positive recurrent for
all $x \in C$, so it follows from standard theory \cite[Theorem 12.25]{kallenberg2002} that for
any finite set~$A$, the function $h_A(x,t) = \pr_x(X(t) \in A)$ tends to zero as $t \to \infty$
for all $x\in C$. Denoting the hitting time of~$X$ into~$C$ by~$\tau_C$, the strong Markov
property implies that for all~$x$,
\begin{align*}
\pr_x(X(t) \in A)
&= \pr( X(t) \in A, \tau_C \le t) + \pr( X(t) \in A, \tau_C > t) \\
&= \E_x h_A(X(\tau_C), t-\tau_C) 1(\tau_C\le t) + \pr_x( X(t) \in A, \tau_C > t).
\end{align*}
Because $\pr_x(\tau_C < \infty) = 1$, it then follows from dominated convergence that the
right-hand side in the above equality converges to zero as $t \to \infty$. Thus, $X(t) \to \infty$
in probability.

To see that $X(t) \to \infty$ implies the instability of~$X$, let us assume that $X$ is stable.
Then by choosing a finite set~$A$ such that $\pi(A)>0$, we see that $\pr_x(X(t) \in A) > 0$ for
large~$t$. This contradicts the fact that $X(t) \to \infty$ in probability, so $X$ must be
unstable.
\end{proof}

\subsection{Uniform limits of monotone functions}
\label{sec:uniformLimits}

\begin{proof}[Proof of Proposition~\ref{the:uniformLimits}]
(i) We show that when $f$ is decreasing in all its input variables, the uniform limits of~$f$ are
given by
\begin{align*}
f^0 &= \inf_x f(x), \\
f^{n,\sigma}(x_{\sigma(1)},\dots,x_{\sigma(n)}) &= \inf_{x_{\sigma(n+1)},\dots,x_{\sigma(N)}}
f(x_1,\dots,x_N).
\end{align*}
Observe first that given $\epsilon>0$, there exists~$y$ such that $|f(y)-f^0| \le \epsilon$.
Hence, by defining $r = \max(y_1,\dots,y_N)$, it follows from the monotonicity of~$f$ that
\begin{equation}
\label{eq:limDecr0}
\sup_{x: x_1,\dots,x_N > r} |f(x)-f^0| \le \epsilon.
\end{equation}
This shows that the assertion holds for $N=1$.

To proceed by induction, let us assume that the claim holds for all positive decreasing functions
on $\Z_+^{N-1}$. Let $f$ be a positive and decreasing function on $\Z_+^N$, let $\epsilon>0$, and
choose a permutation~$\sigma$. By symmetry, we assume without loss of generality that $\sigma$ is
the identity permutation, and denote $f^n = f^{n,\sigma}$. Using~\reff{eq:limDecr0}, we can first
choose an $r_0$ such that $|f(x)-f^0| \le \epsilon/2$ when $x_1,\dots,x_N > r_0$. Then by the
monotonicity of $f$, it follows that $|f^n(x_1,\dots,x_n) - f^0| \le \epsilon/2$ for all
$x_1,\dots,x_n > r_0$. Thus,
\begin{equation}
\label{eq:limDecr1}
|f(x) - f^n(x_1,\dots,x_n)| \le \epsilon
\end{equation}
for all~$x$ such that $x_1,\dots,x_N > r_0$.

Let us next choose an $i \in \{1,\dots,n\}$ and $y_i \in \{0,\dots,r_0\}$. Then the function
\[
(x_1,\dots,x_{i-1},x_{i+1},\dots,x_N) \mapsto f(x_1,\dots,x_{i-1},y_i,x_{i+1},\dots,x_N)
\]
is decreasing on $\Z_+^{N-1}$. Hence, by the induction assumption we can choose a number
$r_i(y_i)$ such that
\[
|f(x) - \inf_{x_{n+1},\dots,x_N} f(x)| \le \epsilon
\]
for all~$x$ such that $x_i=y_i$ and $x_{n+1},\dots,x_N>r(y_i)$. In particular, by defining $r_i =
\max(r_i(0),\dots,r_i(r_0))$, it follows that
\begin{equation}
\label{eq:limDecr2}
|f(x) - f^n(x_1,\dots,x_n) | \le \epsilon
\end{equation}
for all~$x$ such that $x_i \le r_0$ and $x_{n+1},\dots,x_N > r_i$. Finally, by defining $r =
\max(r_0,r_1,\dots,r_n)$, we see by combining~\reff{eq:limDecr1} and~\reff{eq:limDecr2} that
\[
\sup_{x \in \Z_+^N: x_{n+1},\dots,x_N > r} |f(x) - f^n(x_1,\dots,x_n)| \le \epsilon,
\]
which completes the induction step.

(ii) To see that $fg$ has uniform limits at infinity, it suffices to note that for any~$n$ and
any~$\sigma$ (omitting the arguments of the functions),
\[
| fg - f^{n,\sigma} g^{n,\sigma} | \le || f || | g - g^{n,\sigma} | + || g || | f - f^{n,\sigma}
|,
\]
and the same obviously holds for~$f^0$ and~$g^0$ in place of $f^{n,\sigma}$ and $g^{n,\sigma}$.
Hence, the claim for $fg$ follows by taking the supremum over $x \in \Z_+^N$ such that
$x_{\sigma(n+1)},\dots,x_{\sigma(N)} > r$ on both sides of the above inequality, and then
letting~$r$ tend to infinity. The proof for $f+g$ is analogous.
\end{proof}

\subsection{Strong law of large numbers for a time-changed Poisson process}
\label{sec:LLN}

\begin{lemma}
\label{the:LLN}
Let $N$ be a unit-rate Poisson process, and let $A$ be a stochastic process with paths in
$D(\R_+,\R_+)$ such that
\begin{equation}
 \label{eq:LLNAssumption}
 \lim_{t \to \infty} t^{-1} \int_0^t A(s) \, ds = c
  \quad \text{a.s.}
\end{equation}
for some constant $c \ge 0$. Then
\begin{equation}
  \label{eq:LLN}
  \lim_{t \to \infty} t^{-1} N\left( \int_0^t A(s) \, ds \right) = c
  \quad \text{a.s.}
\end{equation}
\end{lemma}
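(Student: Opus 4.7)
The plan is to reduce \reff{eq:LLN} to the classical strong law of large numbers for a unit-rate Poisson process, namely $u^{-1} N(u) \to 1$ almost surely as $u \to \infty$, combined with a time change. First I would set $B(t) = \int_0^t A(s)\,ds$ and observe that since $A$ takes values in $\R_+$, the map $t \mapsto B(t)$ is nondecreasing, so $B(t)$ increases to a (possibly infinite) limit $L \in [0,\infty]$ on every sample path. I would then work on the intersection of the almost sure event from the hypothesis \reff{eq:LLNAssumption} and the almost sure event on which the Poisson SLLN $u^{-1} N(u) \to 1$ holds; this intersection still has probability one.

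On this event I would split into two cases according to the value of $L$. If $L = \infty$, then along $u_t := B(t)$ the Poisson SLLN yields $N(B(t))/B(t) \to 1$, and multiplying by the hypothesis $B(t)/t \to c$ gives
\[
  t^{-1} N(B(t)) \;=\; \frac{N(B(t))}{B(t)} \cdot \frac{B(t)}{t} \;\to\; 1 \cdot c \;=\; c.
\]
If instead $L < \infty$, then $B(t)/t \to 0$ forces $c = 0$, while the monotone bound $N(B(t)) \le N(L)$ is finite pathwise because a Poisson process takes finite values at every finite time. Dividing by $t$ then gives $t^{-1} N(B(t)) \to 0 = c$.

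The only mildly delicate point is the $L<\infty$ case: I want to use $N(L)<\infty$ a.s.\ even though $L$ is random, which requires the pathwise observation that $N(u)$ is finite for every finite~$u$ and hence $N(L(\omega))<\infty$ for every $\omega$ with $L(\omega)<\infty$. Beyond this bit of bookkeeping I expect no obstacle, since the argument is just a composition of the classical Poisson SLLN with the monotonicity of~$B$ and the hypothesis on its asymptotic average.
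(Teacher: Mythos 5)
Your proof is correct and follows essentially the same route as the paper: both arguments work pathwise on the intersection of the two almost-sure events and reduce the claim to the classical strong law of large numbers for the unit-rate Poisson process. The only real difference is bookkeeping: the paper sandwiches $\int_0^t A(s)\,ds$ between $(c-\epsilon)_+\,t$ and $(c+\epsilon)t$ and uses monotonicity of $N$, which disposes of your two cases (including the degenerate one where the integral stays bounded, forcing $c=0$) in a single stroke, whereas you split that case off and handle it via the pathwise finiteness of $N$ at finite times.
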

\begin{proof}
We will view $N$ and $A$ as $D(\R_+,\R_+)$-valued measurable mappings on a probability space
$(\Omega,\cF,\pr)$. Fix a set $\Omega_1 \subset \Omega$ with $\pr(\Omega_1)=1$ such
that~\reff{eq:LLNAssumption} holds for all realizations $A = A_\omega$ with $\omega \in \Omega_1$.
The law of large numbers for the Poisson process (e.g.\ Asmussen~\cite[Proposition
V.1.4]{asmussen2003}) shows that there exists $\Omega_2 \subset \Omega$ with $\pr(\Omega_2)=1$
such that $t^{-1} N_\omega(t) \to 1$ for all $\omega \in \Omega_2$. As a consequence,
\begin{equation}
 \label{eq:LLNPoisson}
 t^{-1} N_\omega(\lambda t) \to \lambda
\end{equation}
for all $\omega \in \Omega_2$ and $\lambda \ge 0$.

Fix an $\omega \in \Omega_1 \cap \Omega_2$. Given an arbitrary $\epsilon > 0$, we may then choose
a $t_0 = t_0(\omega,\epsilon)$ such that
\[
 (c-\epsilon)_+ t \le \int_0^t A_\omega(s) \, ds \le (c+\epsilon)t
\]
for all $t > t_0$, where we denote $u_+ = \max(u,0)$. The above inequalities imply that
\[
 t^{-1} N_\omega((c-\epsilon)_+ t) \le t^{-1} N_\omega \left( \int_0^t A_\omega(s) \, ds \right) \le t^{-1} N_\omega((c+\epsilon)t)
\]
for all $t > t_0$. With the help of~\reff{eq:LLNPoisson}, the claim follows by letting first $t
\to \infty$ and then $\epsilon \to 0$.
\end{proof}

\bibliographystyle{apalike}
\bibliography{lslReferences}

\end{document}